\documentclass[12pt]{article}
\usepackage[top=1in, bottom=1in, left=1.2in, right=1.2in]{geometry}
\usepackage{graphicx, amsfonts,amssymb,amsthm,amsbsy,latexsym,amscd,amsmath,dsfont,euscript,enumerate,manfnt, marvosym,verbatim, calc,mathrsfs,marvosym,textcomp, color,xcolor,tikz-cd,setspace,stmaryrd,upgreek,bm} 
\usepackage{float} 
\usepackage{pgf,tikz,amsmath}
\usepackage{tikzit}

\tikzstyle{Doty}=[fill=black, draw=black, shape=circle]
\tikzstyle{doty}=[fill=black, draw=black, shape=circle]

\usepackage{blkarray}
\usepackage{caption}
\usepackage{enumerate}
\usepackage[pagebackref,colorlinks=true,
linkcolor=blue,
urlcolor=red,colorlinks,
citecolor=green]{hyperref}
\usepackage{cleveref}

\usepackage{parskip}
\usepackage{setspace}
\makeatletter
\def\thm@space@setup{%
  \thm@preskip=10pt 
  \thm@postskip=6pt 
}
\makeatother

\newtheorem{theorem}{Theorem}

\newtheorem{lemma}[theorem]{Lemma}
\newtheorem{claim}{Claim}

\theoremstyle{definition}

\newtheorem{conjecture}{Conjecture}

\newtheorem*{acknowledgement}{Acknowledgement}

\title{On Ramsey goodness of $K_{2,n}$ versus cycles}
\author{Abisek Dewan\footnote{Department of Mathematics and Statistics, IISER Kolkata, India (ad22rs069@iiserkol.ac.in)}\and Sayan Gupta \footnote{National Institute of Science Education and Research,  Bhubaneswar, India} \footnote{ Homi Bhabha National Institute, Traning School Complex, Anushakti Nagar, Mumbai 400094, India (sayan.gupta@niser.ac.in)} \and Rajiv Mishra\footnote{Department of Mathematics and Statistics, IISER Kolkata, India (rm20rs017@iiserkol.ac.in)}}
\date{}

\begin{document}

\maketitle
\begin{abstract}

A graph $G$ is called $H$-good if $R(G,H)=(|G|-1)(\chi(H)-1)+\sigma(H)$, where $\sigma(H)$ denotes the size of the smallest color class in a $\chi(H)$-coloring of $H$. In Ramsey theory, it is an interesting problem to study whether a graph $G$ is $H$-good or not. In this article, we study the Ramsey goodness of the pair $(K_{2,n},C_m)$, which naturally lies between the classical star-cycle and book-cycle problems.  We prove that
\begin{equation*}
    R(K_{2,n},C_{\{m,m+1\}})=m+1.
\end{equation*}
for all $m\ge 2n+2$, and consequently establish that 
\begin{equation*}
    R(K_{2,n},C_{m})=m+1.
\end{equation*}
for all $m\ge 3n+4$. This proves that $C_m$ is $K_{2,n}$-good in this range and improves a particular case of a result on the Ramsey goodness by Pokrovskiy and Sudakov.  Further, we provide a construction of a graph that disproves the $C_{2m}$-goodness of $K_{2,n}$ for all even $m$ satisfying $n\geq m+2$.
\end{abstract}
\noindent {\bf Key words:} Ramsey number; Ramsey goodness; Weakly pancyclic graph.

\noindent {\bf AMS Subject Classification:} 05D10, 05C55 (Primary); 05C35 (Secondary).
\section{Introduction}
Let $G=(V,E)$ be a graph. We introduce some standard notations used throughout this article. For a vertex $v\in V(G)$, we denote the set of neighbors of $v$ in $G$ by $N_{G}(v)$. The minimum and maximum degrees of $G$ are represented by $\delta(G)$ and $\Delta(G)$, respectively. Let $X\subseteq V(G)$. We denote the subgraph induced by $X$ as $G[X]$. The length of the largest cycle in a graph $G$ is called the \emph{circumference} and the length of the smallest cycle is called the \emph{girth}. They are denoted by $c(G)$ and $g(G)$, respectively. A graph is called \emph{$k$-connected} if the deletion of any $(k-1)$ vertices does not make the graph disconnected. The minimum such $k$ is defined as the \emph{connectivity} of the graph, and it is denoted by $\kappa(G)$. Let $C$ be a cycle on the vertices $\{1,2,3,\ldots ,n\}$. For vertices $i,j\in V(C)$, we denote the arc (or stretch) from $i$ to $j$ by
\[
[i,j]:=\{i,i+1,\ldots,j\},
\]
where addition is taken modulo $n$. The circular distance between $i$ and $j$ is the minimum of the lengths of the arcs $[i,j]$ and $[j,i]$.

Let $G$ and $H$ be two graphs. The \emph{Ramsey number}, denoted by $R(G,H)$, is the minimum positive integer $N$ such that any graph $\Gamma$ of order $N$ satisfies the property that either $\Gamma$ contains $G$ ($\Gamma \supseteq G$) or its complement contains $H$ ($\overline{\Gamma}\supseteq H$) as subgraphs. 
Let $G$ be a connected graph and $H$ be any graph satisfying $|V(G)|\geq \sigma(H)$ where $\sigma(H)$ is the size of the smallest color class in a $\chi(H)$-coloring of $H$. A foundational result from Burr \cite{burr1981ramsey} in 1981 provides a general lower bound on the Ramsey number of such pairs,
\begin{align*}
    R(G,H)\geq (|G|-1)(\chi(H)-1)+\sigma(H).
\end{align*}
We say $G$  is \emph{$H$-good} if equality holds in the above inequality. The notion of \emph{Ramsey goodness} was introduced by Burr and Erd\"{o}s \cite{burr1983generalizations}. Since then, there has been considerable study on the Ramsey goodness of different graph pairs. A \emph{star} is the bipartite graph $K_{1,n}$. A \emph{book} graph, denoted by $B_{n}$, is the graph consisting of $n$ triangles sharing a common edge called the \emph{base}. In graph Ramsey theory, star-cycle and book-cycle are two important and well-known graph pairs whose exact Ramsey value has been studied throughout the years. In 1973, Lawrence \cite{SLLawrence1973} obtained two exact values of the Ramsey number $R(K_{1,n},C_m)$.

\begin{theorem}[Lawrence \cite{SLLawrence1973}] \label{thm: Lawrence}
    \begin{align*}
R(K_{1,n},C_{m})&=\left\{\begin{array}{lll}
		2n+1& \textnormal{ if  }&  m \textnormal{ is odd and } m\leq 2n-1\\
        m &\textnormal{  if }& m\geq 2n.
    \end{array}\right.  
\end{align*}
\end{theorem}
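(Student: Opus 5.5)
The plan is to prove the two cases of Theorem~\ref{thm: Lawrence} separately. In each case I would first exhibit a graph on one fewer vertex that certifies the lower bound, and then argue the upper bound through degree considerations on $\overline{\Gamma}$ combined with a classical Hamiltonicity criterion (Dirac/Pósa) or a pancyclicity-type result. Throughout I use the convention that $\Gamma\not\supseteq K_{1,n}$ means $\Delta(\Gamma)\le n-1$, which gives $\delta(\overline{\Gamma})\ge N-n$ when $|\Gamma|=N$.

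\textbf{Case $m\ge 2n$.} For the lower bound, take $\Gamma=K_{m-1}$ complemented, i.e.\ let $\overline{\Gamma}$ be the empty graph on $m-1$ vertices. Then $\overline{\Gamma}$ has fewer than $m$ vertices, so it contains no $C_m$, and $\Gamma=K_{m-1}$ contains $K_{1,n}$; this fails to certify the bound. So the lower bound needs a different graph; the cleanest certificate is $\overline{\Gamma}=K_{m-1}$ with $\Gamma$ empty. This has no $K_{1,n}$ and no $C_m$, since there are only $m-1$ vertices, which gives $R\ge m$.

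For the upper bound, let $|\Gamma|=m$ with $\Delta(\Gamma)\le n-1$. Then $\delta(\overline{\Gamma})\ge m-n\ge m/2$ because $m\ge 2n$. Dirac's theorem then gives a Hamiltonian cycle in $\overline{\Gamma}$, which is a $C_m$.

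\textbf{Case $m$ odd, $m\le 2n-1$.} For the lower bound, take $\Gamma=2K_n$, so that $\overline{\Gamma}=K_{n,n}$. Here $\Delta(\Gamma)=n-1$, so $\Gamma$ has no $K_{1,n}$, and $\overline{\Gamma}$ is bipartite, so it has no odd cycle. This gives $R\ge 2n+1$.

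For the upper bound, let $|\Gamma|=N=2n+1$ with $\Delta(\Gamma)\le n-1$. Then $\delta(\overline{\Gamma})\ge n+1>N/2$. I would invoke Bondy's theorem: a Hamiltonian graph with at least $N^2/4$ edges is either pancyclic or equal to $K_{N/2,N/2}$. The minimum degree already makes $\overline{\Gamma}$ Hamiltonian by Dirac, and
\[
|E(\overline{\Gamma})|\ge N(n+1)/2>N^2/4 .
\]
Moreover $N$ is odd, so $\overline{\Gamma}\ne K_{N/2,N/2}$. Hence $\overline{\Gamma}$ is pancyclic and contains $C_m$ for every $3\le m\le N$, and in particular for odd $m\le 2n-1$.

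The main obstacle is the pancyclicity step. If citing Bondy's theorem is not allowed, I would prove directly that $\delta>N/2$ yields every cycle length. The approach would be to take a Hamiltonian cycle and show, by a counting argument on chords, that for each length $\ell$ some pair of chords shortcuts the cycle to length exactly $\ell$.
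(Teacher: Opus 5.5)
The paper does not prove this theorem. It is quoted from Lawrence as background, so there is no internal argument to compare with. Your proof is correct and self-contained modulo two classical results, Dirac's theorem and Bondy's pancyclicity theorem. That is on the same footing as the paper's own reliance on Dirac, Nash-Williams and Brandt et al.

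\textbf{The case $m\ge 2n$.} The edgeless graph on $m-1$ vertices certifies $R\ge m$. This is exactly Burr's bound $(m-1)(\chi(K_{1,n})-1)+\sigma(K_{1,n})=m$. For the upper bound, $\delta(\overline{\Gamma})\ge m-n\ge m/2$, and Dirac's theorem gives a Hamiltonian cycle, which is a $C_m$.

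\textbf{The case of odd $m\le 2n-1$.} The graph $2K_n$ certifies $R\ge 2n+1$. On $N=2n+1$ vertices you have $\delta(\overline{\Gamma})\ge n+1>N/2$, so $\overline{\Gamma}$ is Hamiltonian by Dirac, and
\[
|E(\overline{\Gamma})|\ge (2n+1)(2n+2)/4>N^2/4 .
\]
Bondy's theorem then makes $\overline{\Gamma}$ pancyclic, because $N$ odd rules out $K_{N/2,N/2}$. This also explains the threshold: on $2n$ vertices, Bondy's exceptional graph $K_{n,n}$ is precisely the complement of your lower-bound construction.

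A few points to clean up:
\begin{itemize}
\item Delete the false start with $\Gamma=K_{m-1}$ in the first lower bound.
\item Note that Dirac's theorem needs at least three vertices. This holds because $C_m$ requires $m\ge 3$. In the odd case, $3\le m\le 2n-1$ forces $n\ge 2$, so $N\ge 5$.
\item You could cite Bondy's minimum-degree form directly: $\delta\ge N/2$ implies pancyclic or $K_{N/2,N/2}$. This skips the edge count.
\item The fallback you sketch under ``main obstacle'' is unnecessary. As written it is only a plan, not an argument, so drop it and simply cite Bondy.
\end{itemize}
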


\noindent Note that the first result in the above theorem implies that $K_{1,n}$ is $C_m$-good if $m$ is odd and $m\leq 2n-1$. The later one establishes the $K_{1,n}$-goodness of $C_{m}$ if $m\geq 2n$. Zhang et al.\cite{zhang2016narrowing} and Allen et al.\cite{allen2023ramsey} respectively extended the study of star-cycle Ramsey number for the remaining parametric conditions. However, these results do not explicitly study the Ramsey goodness of the pair $(K_{1,n},C_{m})$.

A similar study has also been done on the Ramsey goodness of the pair $(B_{n},C_{m})$. Faudree et al. \cite{faudree1991cycle} established that $B_{n}$ is $C_{m}$-good for odd $m\geq 5$ satisfying $n\geq4m-13$ and $C_{m}$ is $B_{n}$-good for all $m\geq 2n+2$. The latter one was further improved by Shi \cite{shi2010ramsey} for $m> 3n/2+7/4$.

\begin{theorem}[Faudree \emph{et al.} \cite{faudree1991cycle}] \label{thm: Faudree}
    \begin{align*}
R(B_{n},C_{m})&=\left\{\begin{array}{lll}
		2n+3& \textnormal{ if  }&  m\geq 5 \textnormal{ is odd and } n\geq 4m-13\\
        2m-1 &\textnormal{  if }& m\geq 2n+2.
    \end{array}\right.  
\end{align*}
\end{theorem}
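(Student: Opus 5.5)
The statement has two parts, and for each we need a matching lower and upper bound. The lower bounds come from Burr's bound. Since $\chi(C_m)=3$ and $\sigma(C_m)=1$ for odd $m$, and $|B_n|=n+2$, the first value is $2(n+1)+1=2n+3$. This is realized by $\Gamma=2K_{n+1}$: it has no $B_n$ because each component has only $n+1$ vertices, and $\overline{\Gamma}=K_{n+1,n+1}$ is bipartite, so it has no odd $C_m$. For the second value, $\chi(B_n)=3$ and $\sigma(B_n)=1$ give $2(m-1)+1=2m-1$. This is realized by $\Gamma=K_{m-1,m-1}$: it is triangle-free, so it has no $B_n$, and $\overline{\Gamma}=2K_{m-1}$ has no $C_m$. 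The work is in the upper bounds. Throughout, $\Gamma$ is $B_n$-free and $\overline{\Gamma}$ is $C_m$-free.

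The basic tool is the neighborhood of a vertex. If $v\in V(\Gamma)$, then every edge $vx$ lies in fewer than $n$ triangles, so $\Delta(\Gamma[N(v)])\le n-1$. Hence $\overline{\Gamma}[N(v)]$ has minimum degree at least $|N(v)|-n$. When $|N(v)|\ge 2n$, Dirac's condition holds, and Bondy's theorem makes $\overline{\Gamma}[N(v)]$ pancyclic or $K_{t,t}$. In either case it contains $C_m$ whenever $m\le |N(v)|$, with $m$ even in the balanced bipartite case.

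\textbf{Case $m\ge 2n+2$, $N=2m-1$.} If some vertex has $\deg_\Gamma(v)\ge m$, then $|N(v)|\ge m\ge 2n$ and the tool above gives a $C_m$ in $\overline{\Gamma}$. In the exceptional $K_{t,t}$ case with $m$ odd, $\Gamma[N(v)]$ contains $2K_t$ with $t\ge n+2$, which gives $B_n$. So we may assume $\Delta(\Gamma)\le m-1$, that is, $\delta(\overline{\Gamma})\ge m-1$ on $2m-1$ vertices.

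Next we settle connectivity. Two components would each need at least $m$ vertices, which is too many, so $\overline{\Gamma}$ is connected. If $\overline{\Gamma}$ has a cut vertex, then the two blocks each have at least $m$ vertices and share one vertex, so each block is exactly $K_m$, giving $C_m$. Thus $\overline{\Gamma}$ is $2$-connected.

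Dirac's circumference bound then gives $c(\overline{\Gamma})\ge\min(N,2\delta)\ge 2m-2\ge m$. If $\overline{\Gamma}$ is bipartite, then $\Gamma$ contains two disjoint cliques covering $2m-1$ vertices, one of them of order at least $m\ge n+2$, and that clique contains $B_n$. Otherwise, since $\delta\ge m-1\ge (N+2)/3$, Brandt's theorem says that a non-bipartite graph with this minimum degree is weakly pancyclic with girth $3$ or $4$. So $\overline{\Gamma}$ has all cycle lengths from $4$ to $c(\overline{\Gamma})$, and in particular contains $C_m$.

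\textbf{Case $m$ odd, $n\ge 4m-13$, $N=2n+3$.} This is the harder half. If some vertex has $\deg_\Gamma(v)\ge 2n$, we again obtain $C_m$ inside $\overline{\Gamma}[N(v)]$ (or $B_n$ in the bipartite exception). So $\Delta(\Gamma)\le 2n-1$, and every vertex has at least $3$ non-neighbours. A minimum-degree argument alone is far too weak here, so I would use a structural (stability) approach. Since $\overline{\Gamma}$ contains no odd $C_m$ and $N$ is large compared with $m$ (this is where $n\ge 4m-13$ enters), I would argue that $\overline{\Gamma}$ is close to bipartite, so that $\Gamma$ is close to two disjoint cliques $A$ and $B$ with $|A|+|B|=2n+3$, one of which therefore has at least $n+2$ vertices.

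To make this precise, I would fix a vertex $v$ of maximum $\Gamma$-degree and study $\overline{\Gamma}[N(v)]$. Its minimum degree is at least $|N(v)|-n$, so long paths exist in it, and they can be closed into odd cycles of length $m$ through vertices outside $N[v]$. Odd cycle lengths have to be adjusted in steps of $2$, which is where the interval $[m,4m]$ of the hypothesis is spent. The conclusion of this step should be that $\Gamma[N(v)]$ is nearly a clique of order at least $n+2$, and hence contains $B_n$.

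I expect the main obstacle to be this exact stability step. It must rule out $C_m$ in $\overline{\Gamma}$ while tracking explicit constants, and the bound $n\ge 4m-13$ will only come out of a careful case analysis of where the chords closing the odd cycle can go.
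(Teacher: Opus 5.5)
The paper gives no proof of this theorem; it only quotes it from Faudree et al., so your proposal has to stand on its own. Your two lower-bound constructions are correct. Your argument for $m\ge 2n+2$ is a complete upper-bound proof: Bondy's theorem on $\overline{\Gamma}[N(v)]$ forces $\Delta(\Gamma)\le m-1$, and the degree count makes $\overline{\Gamma}$ $2$-connected, since a cut vertex forces two copies of $K_m$. Dirac's circumference bound together with the Brandt--Faudree--Goddard theorem for $\delta\ge (N+2)/3$ then gives $C_m$, unless $\overline{\Gamma}$ is bipartite, in which case $\Gamma$ has a clique of order $m\ge n+2$.

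\textbf{The gap} is the other half. The upper bound $R(B_n,C_m)\le 2n+3$ for odd $m$ and $n\ge 4m-13$ is not proved; what you give is a plan, and its stated target cannot be reached. Under your standing assumption $\Delta(\Gamma[N(v)])\le n-1$, the graph $\Gamma[N[v]]$ contains no $B_n$ at all. A base $vx$ would need $n$ neighbours of $x$ in $N(v)$. A base $xy\subseteq N(v)$ has at most $n-2$ common neighbours in $N(v)$, plus $v$. So $\Gamma[N(v)]$ is never \emph{nearly a clique of order at least $n+2$}, and any book must use vertices outside $N[v]$. Moreover, near the extremal configuration $2K_{n+1}$ the maximum degree is only about $n$, so a maximum-degree neighbourhood carries no usable structure there.

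\textbf{What a proof must do} is split by $\Delta=\Delta(\Gamma)$.

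For $\Delta\le (4n+1)/3$, your own second-case argument works. Here $\delta(\overline{\Gamma})\ge (2n+5)/3=(N+2)/3$, so $\overline{\Gamma}$ is either bipartite, giving a clique of order $n+2$ in $\Gamma$, or weakly pancyclic with girth $3$ or $4$ and circumference at least $\delta(\overline{\Gamma})+1\ge (2n+8)/3\ge m$.

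For $\Delta\ge 2n$, your neighbourhood argument works, but at $\deg_\Gamma(v)=2n$ the exception $\overline{\Gamma}[N(v)]=K_{n,n}$ really occurs, and $K_1\vee 2K_n$ is $B_n$-free. Your parenthetical \emph{or $B_n$} is therefore unjustified. You must use a non-neighbour $a$ of $v$:
\begin{itemize}
\item either $a$ has $\overline{\Gamma}$-neighbours on both sides, giving odd cycles of every length $3,5,\dots,2n+1$;
\item or $a$ is $\Gamma$-complete to one side $X$, and then an edge $xx'$ of $X$ has the $n$ common neighbours $(X\setminus\{x,x'\})\cup\{v,a\}$.
\end{itemize}

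The real content of the theorem is the middle range $(4n+1)/3<\Delta<2n$. There $\delta(\overline{\Gamma})<(N+2)/3$, and $\overline{\Gamma}[N(v)]$ is only guaranteed minimum degree $|N(v)|-n$, roughly between a quarter and a half of its order. So Dirac--Bondy fails, and even where Brandt--Faudree--Goddard applies to $\overline{\Gamma}[N(v)]$ (when $|N(v)|\ge (3n+2)/2$), it leaves a bipartite case that has to be resolved through vertices outside $N[v]$. In this range you must construct an odd cycle of length exactly $m$ in $\overline{\Gamma}$, or a book through outside vertices, by hand. This is the only place left for the hypothesis $n\ge 4m-13$ to do its work, and your proposal contains no argument for it.
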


The motivating observation is that by the deletion of the base edge from $B_n$, we get $K_{2,n}$. In particular,  we have
\begin{equation*}
    K_{1,n}\subset K_{2,n}\subset B_{n}.
\end{equation*}
 Since $K_{2,n}$ lies between a star and a book as a subgraph, it is natural to investigate whether the notion of goodness can be extended to the pair $(K_{2,n}, C_m)$. In \cite{gupta2025study}, Gupta settled the case for odd $m$ by showing that $K_{2,n}$ is $C_{m}$-good for odd $m$ under certain conditions.

\begin{theorem}[Gupta \cite{gupta2025study}]
If $n\geq 3493$, then for each odd $m\geq 7$ and $n\geq2m+499$,  
\begin{equation*}
 R(K_{2,n},C_{m})=2n+3.    
\end{equation*}
\end{theorem}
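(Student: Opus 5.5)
The plan is to prove the lower bound $R(K_{2,n},C_m)\ge 2n+3$ by an explicit colouring and the upper bound by showing that the complement $G=\overline{\Gamma}$ of a $K_{2,n}$-free graph $\Gamma$ on $N=2n+3$ vertices always contains $C_m$.

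\textbf{Lower bound.} Let $\Gamma=2K_{n+1}$, two disjoint cliques, so $|\Gamma|=2n+2$. Since $K_{2,n}$ is connected on $n+2$ vertices, it cannot lie inside a component with only $n+1$ vertices, so $\Gamma\not\supseteq K_{2,n}$. The complement is $K_{n+1,n+1}$, which is bipartite and therefore contains no odd cycle. This gives $R(K_{2,n},C_m)\ge 2n+3$, which is Burr's bound with $\chi(C_m)=3$ and $\sigma(C_m)=1$.

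\textbf{Upper bound: bipartite case.} Take $\Gamma$ on $N=2n+3$ vertices with $\Gamma\not\supseteq K_{2,n}$ and $G=\overline{\Gamma}$. The hypothesis means every pair $u,v$ has at most $n-1$ common $\Gamma$-neighbours. Equivalently,
\[
|(N_G(u)\cup N_G(v))\setminus\{u,v\}|\ \ge\ N-2-(n-1)=n+2 \qquad\text{for all } u\neq v .
\]
If $G$ is bipartite, one colour class has at least $n+2$ vertices. That class is a clique in $\Gamma$, and a clique on $n+2$ vertices contains $K_{2,n}$, a contradiction. So $G$ is non-bipartite, and we must upgrade an odd cycle to one of length exactly $m$.

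\textbf{Upper bound: non-bipartite case.} I would split on the minimum degree of $G$.
\begin{enumerate}[(a)]
\item \emph{Some vertex $v$ has small $G$-degree.} Then $A=N_\Gamma(v)$ is large. Every other vertex has at most $n-1$ $\Gamma$-neighbours in $A$, so it has many $G$-neighbours in $A$. The induced graph $G[A]$ should then be dense, with minimum degree roughly $|A|-n$. A Dirac/Bondy-type argument inside $A$ should give a pancyclic subgraph on more than $m$ vertices, hence $C_m$.
\item \emph{$\delta(G)$ is large.} The union condition forces $G$ to be close to complete bipartite with parts of size about $n+1$; otherwise the density is high enough to apply Brandt's theorem. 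That theorem says a non-bipartite graph with more than $(N-1)^2/4+1$ edges is weakly pancyclic with girth $3$ or $4$. Its circumference should exceed $m$ because $N\ge 4m+1001$, so $C_m$ appears.
\item \emph{Near-bipartite stability case.} Suppose $G$ is close to $K_{n+1,n+1}$ with parts $X,Y$, plus an extra vertex and an edge inside a part. Then $G[X]$ and $G[Y]$ have very few edges, since an edge $xx'$ in $X$ plus a long even path alternating between $X$ and $Y$ gives odd cycles of every length. But a part with few internal $G$-edges is nearly a $\Gamma$-clique of size about $n+1$. Adding the $(2n+3)$-rd vertex and using the union condition, we either find $K_{2,n}$ in $\Gamma$ or find an edge inside a part that closes an odd cycle of length $m$.
\end{enumerate}

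\textbf{Main obstacle.} The hard step is the stability step: proving that $G$ is either dense enough for a weak-pancyclicity theorem or structurally near $K_{n+1,n+1}$. The pair condition above gives only about $n^2/2$ guaranteed edges by naive counting, which is well below Brandt's threshold. I would first try a longest-odd-cycle argument. Take a shortest odd cycle $C$ of length $\ell$. If $\ell<m$, use the $n+2$ union bound on pairs of vertices of $C$ to reroute and lengthen it by $2$ at a time. If $\ell>m$, chords forced by the union condition on antipodal pairs of $C$ should shorten it. The explicit constants $n\ge 2m+499$ and $n\ge 3493$ suggest the proof absorbs losses of this kind, with $2m$ coming from path lengths and the additive constants from the counting.
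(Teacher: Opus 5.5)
Your lower bound ($\Gamma=2K_{n+1}$) and the bipartite case are correct. The non-bipartite case, however, is the whole content of the theorem, and it is not proved. The paper only quotes this result from Gupta and gives no proof of it, so I judge your plan on its own terms.

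\textbf{Step (a) fails as stated.} The guaranteed bound $\delta(G[A])\ge |A|-n$ falls below $|A|/2$ as soon as $\deg_G(v)\ge 3$, so Dirac does not apply in general. Even when Dirac does apply, Bondy's exceptional case really occurs. Take $\Gamma$ to be a vertex $v$ joined to two disjoint $n$-cliques $A_1,A_2$, plus two isolated vertices. Every pair has at most $n-1$ common $\Gamma$-neighbours, and $\deg_G(v)=2$. But $G[N_\Gamma(v)]=K_{n,n}$ has no odd cycle at all. So no argument confined to $A$ can produce $C_m$; the odd cycle must use vertices outside $A\cup\{v\}$.

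\textbf{Steps (b)--(c) rest on a false dichotomy.} For large $n$, a random graph $\Gamma=G(N,0.7)$ has, with high probability, all codegrees about $0.49N<n-1$, so it is $K_{2,n}$-free. Its complement $G$ has about $0.15N^2<(N-1)^2/4$ edges and is far from bipartite. So $G$ is neither near $K_{n+1,n+1}$ nor above the edge threshold of Brandt's theorem, which is therefore the wrong tool here. Your ``stability'' step and your ``lengthen the odd cycle by two at a time'' step only restate what must be shown. The latter is precisely the content of a weak-pancyclicity theorem.

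\textbf{The missing idea is the minimum-degree version, \Cref{Thm:Weakpan-Brandt_nby4_250_1}.} A $2$-connected non-bipartite graph on $N$ vertices with $\delta\ge N/4+250$ is weakly pancyclic, except that odd girth $7$ may lack $C_5$. That exception is why odd $m\ge 7$ is required. The constants point to the template the paper itself uses for \Cref{main result_3}, here with $N=2n+3$.

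\emph{Case $\delta(G)\ge N/4+250$.}
\begin{enumerate}[(i)]
\item $G$ is $2$-connected: removing at most one vertex and disconnecting $G$ leaves a smallest component with at least $\delta\ge 2$ vertices and at least $n$ vertices elsewhere, which gives $K_{2,n}\subseteq\Gamma$.
\item $G$ is non-bipartite by your argument, and has girth at most $4$ since $\delta^2>N$.
\item Its circumference is at least $2\delta>m$ by \Cref{Dirac}.
\end{enumerate}
Hence $C_m\subseteq G$.

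\emph{Case $\delta(G)< N/4+250$.} Some $v$ has $|A|=|N_\Gamma(v)|>3N/4-251=(6n+9)/4-251$. Then $\delta(G[A])\ge |A|-n\ge |A|/4+250$ holds if and only if $|A|\ge 4(n+250)/3$. This follows from the previous bound exactly when $n\ge 3492.5$, which reproduces the constant $3493$. One then applies the same theorem to $G[A]$. First one must separately handle the cases where $G[A]$ is bipartite or not $2$-connected, as in the example above. That is where vertices outside $A\cup\{v\}$ enter. No near-$K_{n+1,n+1}$ stability analysis or edge count is needed.
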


In this article, we have explicitly studied the Ramsey goodness of the pair $(K_{2,n},C_{m})$ for the remaining cases of $n$ and $m$. Our first main result deals with the construction of a graph which disproves the $C_{m}$-goodness of $K_{2,n}$ for even $m$. In particular, we show the following.

\begin{theorem}\label{thm:RamseyBadness in n>m+1}
If $n\geq m+2$ or $n\in \{m-2,m-1\}$,
$$R(K_{2,n},C_{2m})>n+m+1.$$
\end{theorem}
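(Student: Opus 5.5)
The plan is to exhibit an explicit graph $\Gamma$ on $N=n+m+1$ vertices with $K_{2,n}\not\subseteq\Gamma$ and $C_{2m}\not\subseteq\overline{\Gamma}$. The number $n+m+1$ is exactly Burr's bound $(|K_{2,n}|-1)(\chi(C_{2m})-1)+\sigma(C_{2m})=(n+1)+m$. So the construction must beat the standard extremal graph $\Gamma=K_{m-1}\cup K_{n+1}$, which has only $n+m$ vertices, by one vertex. I have not checked that the construction below works; the key step is flagged.

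The starting point is the bipartite obstruction. Split $V(\Gamma)=A\cup B$ with $|A|=m-1$ and $|B|=n+2$. Let $\overline{\Gamma}$ contain all $A$--$B$ edges and no edges inside $A$. Inside $B$, let $\overline{\Gamma}$ contain a sparse graph $F$, so that $\Gamma=K_{m-1}\cup\overline{F}$ with $\overline{F}$ the complement of $F$ on $B$.

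\textbf{No $K_{2,n}$ in $\Gamma$.} Since $A$ and $B$ lie in different components of $\Gamma$, there are two conditions.
\begin{enumerate}[(i)]
\item $K_{m-1}\not\supseteq K_{2,n}$. This holds because $m-1<n+2$, and this is where the hypotheses $n\ge m-2$ (covering both ranges $n\ge m+2$ and $n\in\{m-2,m-1\}$) enter.
\item Every pair $u,v\in B$ has fewer than $n$ common neighbours in $\overline F$. Equivalently, for every pair there is some $w\in B\setminus\{u,v\}$ adjacent in $F$ to $u$ or to $v$.
\end{enumerate}
Condition (ii) forces $F$ to have at most one isolated vertex. It also forbids $F$-components that are single edges. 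So $F$ must be, for example, a disjoint union of paths $P_3$ and triangles covering $B$, with some adjustment according to $n+2 \pmod 3$.

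\textbf{No $C_{2m}$ in $\overline{\Gamma}$ (main obstacle).} Any cycle in $\overline{\Gamma}=K_{m-1,n+2}+F$ uses $k\le m-1$ vertices of $A$. The vertices of $B$ on the cycle fall into $k$ segments, each a path of $F$. If every component of $F$ has at most $3$ vertices, the cycle length is at most $4k$, which is not enough by itself to exclude $2m$.

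I would therefore exploit the components more carefully.
\begin{itemize}
\item \textbf{Parity and segment count.} Take $F$ to consist of triangles and count the possible segment sizes. A $C_{2m}$ needs $\sum_i s_i = 2m-k$ with each $1\le s_i\le 3$ and at most one segment per component. The number of components of $F$ is about $(n+2)/3$, which bounds the number of segments of size $\ge 2$.
\item \textbf{Modifying the construction.} If this count still allows a $C_{2m}$, I would modify the construction instead of fighting the count. One option is to delete a few $A$--$B$ edges of $\overline{\Gamma}$, that is, add edges of $\Gamma$ between $A$ and $B$, while keeping the common-neighbourhood condition.
\item \textbf{Using the evenness of $m$.} The abstract emphasises even $m$. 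So I expect the final construction to balance parts so that every cycle meeting $F$ has length $\not\equiv 0$ in the relevant sense, or so that $2m$ is just unreachable.
\end{itemize}

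\textbf{Order of work.}
\begin{enumerate}
\item Fix $F$, or a modified gadget, satisfying condition (ii).
\item Verify the $K_{2,n}$-freeness of $\Gamma$ by the two-component argument above.
\item Bound the lengths of cycles in $\overline\Gamma$ by counting $A$-vertices and $F$-segments.
\end{enumerate}
The third step is the delicate one. The separate range $n\in\{m-2,m-1\}$ will likely need its own choice of part sizes, since there $K_{m-1}$ is close to containing $K_{2,n}$.
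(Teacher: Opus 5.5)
\textbf{There is a genuine gap: the proposal gives no working construction, and the skeleton it sets up cannot be completed for $n\ge m-1$.}

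Conditions (i)--(ii) on their own force a $C_{2m}$ in $\overline{\Gamma}$ whenever $n\ge m-1$. This includes the whole main range $n\ge m+2$. Here is why:
\begin{enumerate}
\item As you observe, (ii) allows at most one isolated vertex and no single-edge components in $F$. Since $|B|\ge 2$, some component of $F$ has at least three vertices, so $F$ contains a path $b_1b_2b_3$.
\item $A=\{a_1,\dots,a_{m-1}\}$ is complete to $B$ in $\overline{\Gamma}$, and $|B\setminus\{b_1,b_2,b_3\}|=n-1\ge m-2$. Pick distinct $b_4,\dots,b_{m+1}$ there. Then
\[
a_1b_1b_2b_3a_2b_4a_3b_5\cdots a_{m-1}b_{m+1}a_1
\]
is a cycle on $(m-1)+(m+1)=2m$ vertices.
\end{enumerate}
So a single segment of size $3$ with $k=m-1$ already produces a $C_{2m}$. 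No count of components, no parity argument and no choice of $F$ can help. Deleting $A$--$B$ edges of $\overline{\Gamma}$ to break these cycles would also destroy your two-component argument for $K_{2,n}$-freeness.

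More generally, every $K_{2,n}$-free $\Gamma=K_{m-1}\cup H$ on $n+m+1$ vertices fails for $n\ge m-1$, because $F=\overline{H}$ must satisfy (ii). Perturbing Burr's extremal graph $K_{m-1}\cup K_{n+1}$ is the wrong starting point.

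\textbf{The missing idea} is to exclude $C_{2m}$ through a cut vertex rather than through a small side of a bipartition. The paper takes
\[
\overline{\Gamma}=K_1\vee\bigl(K_{s_0}\cup K_{s_1}\cup\cdots\cup K_{s_p}\bigr),
\]
with $\sum_i s_i=n+m$ and every $s_i$ between $m+1$ and $2m-2$.
\begin{itemize}
\item Each block of $\overline{\Gamma}$ is a clique on at most $2m-1$ vertices, so $\overline{\Gamma}\not\supseteq C_{2m}$.
\item $\Gamma$ is the complete multipartite graph $K_{s_0,\dots,s_p}$ plus an isolated vertex. Any two vertices have at most $\sum_i s_i-\min_i s_i\le n-1$ common neighbours, so $\Gamma\not\supseteq K_{2,n}$.
\end{itemize}
Concretely, for $n=pm+t$ with $p\ge 1$ and $p+1\le t<m+p-1$, the paper uses $p$ parts of size $m+1$ and one part of size $m+t-p$. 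For the leftover values $n=q(m+1)-t$ with $q\ge 2$ and $t\in\{0,1,2\}$, it uses parts of sizes $2m-t-2$ and $m+4$, together with $q-2$ parts of size $m+1$. Here $K_{2,n}$-freeness comes from bounded codegrees, not from $\Gamma$ splitting into small components, and that is what buys the extra vertex. The parity of $m$ plays no role.

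\textbf{The cases $n\in\{m-2,m-1\}$} are trivial and need no special part sizes; the paper's two construction lemmas do not cover them.
\begin{itemize}
\item For $n=m-2$ the graph has only $2m-1<2m$ vertices.
\item For $n=m-1$, take the star $K_{1,2m-1}$. It is $C_4$-free, hence $K_{2,n}$-free for $n\ge 2$. Its complement has an isolated vertex, so it contains no $C_{2m}$ on its $2m$ vertices.
\end{itemize}
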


For $n\in\{m,m+1\}$, the goodness remains unknown. Now, it is an interesting question to ask whether $C_{m}$ is $K_{2,n}$-good or not. In this regard, there is a conjecture by Allen, Brightwell, and Skokan \cite{allen2013ramsey}.

\begin{conjecture}\label{conjecture}
   For any graph $H$ and $m\geq \chi(H)|H|$, $C_{m}$ is $H$-good.
\end{conjecture}
We prove this conjecture for $H=K_{2,n}$ and for all $m\geq 3n+4$. In order to prove this result, we first show the following.

\begin{theorem}\label{main theorem_2}
    If $m\geq 2n+2$ and $m\geq 3$, then $R(K_{2,n},C_{\{m,m+1\}})=m+1$.
\end{theorem}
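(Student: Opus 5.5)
The proof has two parts: a lower-bound construction on $m$ vertices, and an upper bound on $m+1$ vertices. Throughout, $R(K_{2,n},C_{\{m,m+1\}})$ is the least $N$ such that every graph $\Gamma$ on $N$ vertices either contains $K_{2,n}$ or has a complement $G=\overline{\Gamma}$ containing $C_m$ or $C_{m+1}$.

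\emph{Lower bound.} Take $\Gamma=K_{1,m-1}$. Two leaves have only the centre as a common neighbour, and the centre and a leaf have no common neighbour, so $\Gamma\not\supseteq K_{2,n}$ for $n\ge 2$. Its complement is $K_{m-1}$ plus an isolated vertex. This contains no $C_m$, since the isolated vertex cannot lie on a cycle. It contains no $C_{m+1}$ either, having only $m$ vertices. Hence $R(K_{2,n},C_{\{m,m+1\}})\ge m+1$.

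\emph{Upper bound.} Let $|\Gamma|=m+1$ with $\Gamma\not\supseteq K_{2,n}$, so any two vertices have at most $n-1$ common $\Gamma$-neighbours. If $\delta(G)\ge (m+1)/2$, Dirac's theorem gives a Hamiltonian cycle $C_{m+1}$ and we are done. Otherwise there is a vertex $v$ with $a:=d_\Gamma(v)\ge m/2\ge n+1$. Put $A=N_\Gamma(v)$ and $B=V\setminus(A\cup\{v\})$, so $|A|+|B|=m$, and $v$ is $G$-adjacent to every vertex of $B$.

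Every $x\ne v$ has at most $n-1$ $\Gamma$-neighbours in $A$, since these are common neighbours with $v$. Hence in $G$ every vertex of $A\cup B$ has at least $|A|-n$ neighbours inside $A$ (at least $|A|-n+1$ for vertices of $B$). So $G[A]$ is dense, with $\delta(G[A])\ge a-n$. I would then show that $G-v$, on exactly $m$ vertices, is Hamiltonian, which gives $C_m$. For this I would use the Bondy--Chvátal closure, or Chvátal's degree-sequence condition, applied to $G-v$: vertices of $B$ have degree at least $a-n+1$ into $A$, and vertices of $A$ have degree at least $a-n$ inside $A$ plus their $B$-neighbours. When $B$ is small relative to $A$, say $|B|\le a-2n+1$, I would route the cycle directly instead. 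Find a Hamiltonian path in $G[A]$ by Dirac, and insert the vertices of $B$ one at a time between consecutive path vertices that are both $G$-neighbours of the inserted vertex. This works because each $b\in B$ misses at most $n-1$ vertices of $A$, so it has many consecutive neighbour pairs on a cycle through $A$.

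\emph{Main obstacle.} The difficulty is the regime where $B$ is large and $G[B]$ may be sparse, i.e.\ $\Gamma[B]$ is dense. Then several vertices have large $\Gamma$-degree, and the degree conditions above are only just short of Chvátal's. Here I would use the $K_{2,n}$-freeness more globally. Summing common neighbourhoods gives $\sum_x\binom{d_\Gamma(x)}{2}\le (n-1)\binom{m+1}{2}$, which bounds the number of vertices of $\Gamma$-degree above roughly $\sqrt{(n-1)(m+1)}$. I would then show that $G$ has at least $(m+1)^2/4$ edges and a long cycle of length at least $m$. With that, a weakly pancyclic theorem (Brandt-type: a non-bipartite graph on $N$ vertices with more than $(N-1)^2/4+1$ edges is weakly pancyclic with girth $3$ or $4$), or Bondy's pancyclicity theorem applied to a Hamiltonian subgraph, supplies a cycle of length exactly $m$ or $m+1$. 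The bipartite exception would be ruled out directly: if $G$ were bipartite then $\Gamma$ would contain two disjoint cliques covering $m+1$ vertices, one of size at least $(m+1)/2\ge n+2$, hence a $K_{2,n}$. The condition $m\ge 2n+2$ is exactly what makes these edge and degree counts close, so I expect the careful bookkeeping in this large-$B$ case to be the heart of the proof.
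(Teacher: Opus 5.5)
Your lower bound is fine, and so are the Dirac case $\delta(G)\ge (m+1)/2$ and the small-$B$ insertion argument: there $a\ge 2n+1$, so $G[A]$ has a Hamiltonian cycle, and each insertion uses up only one of the at least $a-2n+2$ usable edges.

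\textbf{The gap.} The remaining regime is the whole difficulty, and the tools you propose for it fail. The claim that $G$ has at least $(m+1)^2/4$ edges is false. With $n\approx m/2$, $K_{2,n}$-freeness only caps codegrees at about $m/2$, which a pseudo-random $\Gamma$ of density up to about $1/\sqrt2$ satisfies. So $e(G)$ can be as small as roughly $0.15(m+1)^2$.

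Concretely, take $\Gamma$ to be the Schl\"afli graph, strongly regular with parameters $(27,16,10,8)$:
\begin{itemize}
\item any two vertices have at most $10$ common neighbours, so $\Gamma$ is $K_{2,11}$-free, and $m=26\ge 2\cdot 11+2$;
\item its complement $G$ is $10$-regular with $135$ edges, below both $(m+1)^2/4$ and Brandt's threshold $m^2/4+1=170$;
\item $\delta(G)=10<13.5$, and for every $v$ you get $a=16$ and $|B|=10>a-2n+1$, so this is exactly your hard case;
\item Chv\'atal's condition and the Bondy--Chv\'atal closure give nothing, since $G-v$ has degrees $9$ and $10$ on $26$ vertices, so no two degrees sum to $26$.
\end{itemize}
Moreover, ``a long cycle of length at least $m$'' is asserted with no argument, yet that is the real content of the theorem.

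Your plan, if it worked, would always produce a $C_m$, because Brandt's and Bondy's theorems give every length up to the circumference. That is, it would prove $R(K_{2,n},C_m)=m+1$ for all $m\ge 2n+2$. The paper obtains this only for $m\ge 3n+4$ and suspects it may fail below that. This suggests you are aiming at a stronger statement than the one asked.

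\textbf{What is missing.} Use $K_{2,n}$-freeness as a neighbourhood-union condition valid for every pair, $|(N_G(u)\cup N_G(v))\setminus\{u,v\}|\ge m-n\ge m/2$, rather than through degree or edge counts. Also use the hypothesis $G\not\supseteq C_m$ actively. The paper proceeds as follows:
\begin{itemize}
\item it first shows that $G$ is $2$-connected when $C_m$ is absent;
\item Dirac's circumference bound then gives a longest cycle of length at least $m/2$;
\item length exactly $\lceil m/2\rceil$ is excluded by a spacing count for the cycle-neighbours of the endpoints of a longest path outside the cycle;
\item the Dewan et al.\ cycle lemma then forces length at least $m-2$;
\item lengths $m-2$ and $m-1$ are excluded by the restrictions on how neighbours of outside vertices can sit on a longest cycle, together with the absence of $C_m$, which kills certain chords.
\end{itemize}
This yields Hamiltonicity, i.e.\ a $C_{m+1}$, precisely when $C_m$ is missing, which is all the statement requires.
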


\Cref{main theorem_2} proves that if $m\geq 2n+2$, then for $K_{2,n}$-free graph $G$ on $m+1$ vertices, $\overline{G}$ contains either a $C_{m}$ or $C_{m+1}$. If one can show the existence of a $C_{m}$ inside $\overline{G}$, then it ensures that $C_{m}$ is $K_{2,n}$-good for all $m\geq 2n+2$. However, we believe that there may be some counterexamples that contain a $C_{m+1}$ but not a $C_{m}$. We finally prove the $K_{2,n}$-goodness of $C_{m}$ for $m\geq 3n+4$.

\begin{theorem}\label{main result_3}
    If $m\geq 3n+4$, $R(K_{2,n},C_{m})=m+1$.
\end{theorem}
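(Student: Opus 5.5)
The lower bound $R(K_{2,n},C_m)\ge m+1$ will follow from Burr's bound applied with $G=C_m$ and $H=K_{2,n}$, giving $(m-1)(2-1)+2=m+1$. Concretely, one takes $\Gamma=K_{m-1}\cup K_1$. Its complement is the star $K_{1,m-1}$, which contains no $K_{2,n}$ once $n\ge 1$, and $\Gamma$ has no $C_m$. So the content is the upper bound. We must show that if $\Gamma$ has $m+1$ vertices and no $K_{2,n}$, then $\overline{\Gamma}$ contains $C_m$. Write $\overline{\Gamma}=H$, so that $\Gamma=\overline{H}$ has no $K_{2,n}$. By \Cref{main theorem_2}, which applies since $m\ge 3n+4\ge 2n+2$, $H$ already contains $C_m$ or $C_{m+1}$. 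In the first case we are done, so we may assume $H$ is Hamiltonian on $m+1$ vertices. It remains to shorten a Hamiltonian cycle by one vertex.

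Let $C=v_1v_2\cdots v_{m+1}v_1$ be a Hamiltonian cycle of $H$, indices mod $m+1$. The key fact is that a chord $v_iv_{i+2}$ of $H$ yields a $C_m$ by skipping $v_{i+1}$. Suppose no such chord exists. Then every pair $\{v_i,v_{i+2}\}$ is an edge of $\Gamma$. More generally, any vertex $v_j$ adjacent in $H$ to both $v_{i-1}$ and $v_{i+1}$ gives a $C_m$: remove $v_j$ from the cycle (joining $v_{j-1}$ to $v_{j+1}$ is not available, so instead reroute). A cleaner general rule is the standard one. If $v_iv_j$ and $v_{i+1}v_{j+2}$ are both edges of $H$, then
\[
v_{i+1}\cdots v_j v_i v_{i-1}\cdots v_{j+2} v_{i+1}
\]
is a cycle missing only $v_{j+1}$, hence a $C_m$. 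I would collect a family of such forbidden configurations. Each one says: for every $i$ and $j$, at least one of the two pairs is a non-edge of $H$, that is, an edge of $\Gamma$.

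The counting step uses the $K_{2,n}$-freeness of $\Gamma$. For any two vertices $x,y$, their common $\Gamma$-neighbourhood has size at most $n-1$. Equivalently, for every pair $x,y$,
\[
|N_H(x)\cup N_H(y)|\ge m+1-2-(n-1)=m-n,
\]
where vertices other than $x,y$ are counted. So any two vertices together see almost everything in $H$. Fix $i$ and consider $x=v_i$ and $y=v_{i+1}$. There are at least $m-n$ indices $j$ with $v_j\in N_H(v_i)\cup N_H(v_{i+1})$. Using the rotation rule, for each $j$ with $v_j\in N_H(v_i)$ we get $v_{j+1}\notin N_H(v_{i+1})$, shifting appropriately by $j+2$ versus $j+1$ according to the exact rule used. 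Hence $N_H(v_{i+1})$ avoids the shifted copy $N_H(v_i)^{+}$. This gives $|N_H(v_i)|+|N_H(v_{i+1})|\le m+1+O(1)$. A second application to a pair such as $v_i,v_{i+2}$, which are non-adjacent in $H$ under our assumption, uses the union bound to force both neighbourhoods to be large. These two estimates, together with the degree bound $\deg_H(x)\ge m-n-\deg_H(y)$ type inequalities, should produce a contradiction when $m\ge 3n+4$.

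The main obstacle is making the counting tight enough to yield exactly the threshold $3n+4$. The plan there is to pick three consecutive vertices $v_{i},v_{i+1},v_{i+2}$ and apply the common-neighbourhood bound to all three pairs. Each pair forces the union of $H$-neighbourhoods to be at least $m-n$, while the rotation argument forces the shifted neighbourhoods to be pairwise disjoint. Summing then gives roughly $3(m-n)/2\le m+O(n)$ or a similar inequality, which fails for $m\ge 3n+4$. If the direct count falls short, I would instead take a longest-cycle and pancyclicity approach. The idea is to show that $H$ has minimum degree large compared with $(m+1)/2$ on most vertices, since a vertex of small $H$-degree forces a large $\Gamma$-neighbourhood, and two such vertices would create a $K_{2,n}$. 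One would then apply a Bondy-type weak pancyclicity result to a Hamiltonian graph whose edge count exceeds $(m+1)^2/4$. That result gives pancyclicity unless $H=K_{(m+1)/2,(m+1)/2}$, and the balanced bipartite case is excluded because its complement contains $K_{2,n}$.
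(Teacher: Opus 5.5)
\textbf{There is a genuine gap: the step that should turn the Hamilton cycle of $H$ into a $C_m$ does not go through, and the fallback fails in part of the range.}

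Your lower bound is fine, with one small correction: the star $K_{1,m-1}$ does contain $K_{2,1}$, so you need $n\ge 2$ (for $n=1$ one actually has $R(K_{2,1},C_m)=m$). The reduction via \Cref{main theorem_2} to a Hamiltonian $H=\overline{\Gamma}$ on $m+1$ vertices is also fine.

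The gap is the shortening step. For crossing chords $v_av_b$ and $v_{a+s}v_{b+t}$ of $C=v_1\cdots v_{m+1}$, the rerouted cycle has length $m+3-s-t$. So apart from chords $v_iv_{i+2}$, the only two-chord exclusions are $N_H(v_{i+1})\cap\bigl(N_H(v_i)+2\bigr)=\emptyset$ and $N_H(v_{i+2})\cap\bigl(N_H(v_i)+1\bigr)=\emptyset$. For $v_i,v_{i+1},v_{i+2}$ the three pairs need shifts $2$, $2$ and $1$. Since $2+2\neq 1$, no common frame makes the three shifted neighbourhoods pairwise disjoint. You only get pairwise bounds $d_H(v_a)+d_H(v_b)\le m+O(1)$, which is the computation behind Bondy's $e(H)\le (m+1)^2/4$.

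The hypothesis, by contrast, only bounds \emph{unions} from below: $|N_H(x)\cup N_H(y)|\ge m-n$. The two kinds of inequality are compatible. Summing gives $\tfrac32(m-n)\le d_i+d_{i+1}+d_{i+2}\le \tfrac32\bigl(m+O(1)\bigr)$, not the bound with right-hand side $m$ that your contradiction needs. Also, the union bound for $v_i,v_{i+2}$ does not force both neighbourhoods to be large.

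Concretely, take the Paley graph $P_{37}$, which is self-complementary and strongly regular with parameters $(37,18,8,9)$. With $n=10$ and $m=36\ge 3n+4$, $\Gamma=P_{37}$ is $K_{2,10}$-free. Then $H\cong P_{37}$ has all degrees $18$ and all unions at least $26=m-n$. So every numerical inequality you extract holds there, and they cannot by themselves give a contradiction.

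The fallback fails on the same example. Bondy's theorem needs $e(H)\ge (m+1)^2/4$, but $e(P_{37})=333<37^2/4$. In general, $K_{2,n}$-freeness of $\Gamma$ forces that many edges in $H$ only when $m$ is roughly $4n$ or more.

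What is missing is a genuine structural cycle theorem, and the paper supplies it through a minimum-degree dichotomy.
\begin{itemize}
\item If $\delta(H)\ge (m+1)/4+250$, then $H$ is $2$-connected, Hamiltonian and non-bipartite (a bipartite Hamiltonian $H$ would put $K_{(m+1)/2}\supseteq K_{2,n}$ in $\Gamma$). So \Cref{Thm:Weakpan-Brandt_nby4_250_1} makes $H$ weakly pancyclic, and $C_m\subseteq H$.
\item Otherwise some $v$ has $d_\Gamma(v)>\tfrac{3(m+1)}{4}-251$, and every $u\neq v$ has at least $d_\Gamma(v)-n$ $H$-neighbours inside $N_\Gamma(v)$. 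One then wants $H-v$, which has exactly $m$ vertices, to satisfy $\delta(H-v)>(m+2)/3\ge\alpha(H-v)$ and to be $2$-connected. The bound on $\alpha$ uses $(m+2)/3\ge n+2$, which is exactly where $m\ge 3n+4$ enters. \Cref{Thm:Nash-william} then gives a Hamilton cycle of $H-v$, i.e.\ a $C_m$.
\end{itemize}
Your remark that a vertex of small $H$-degree is essentially unique is the seed of the second case. The point, however, is to delete that vertex and find a Hamilton cycle in $H-v$, rather than to count edges.

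Be aware that the paper's inequality $\tfrac{3(m+1)}{4}-251-n>\tfrac{m+2}{3}$ holds for all $n\le (m-4)/3$ only when $m$ is roughly $3000$ or more. So small $m$ needs separate handling even on the paper's route.
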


Pokrovskiy and Sudakov\cite{pokrovskiy2020ramsey} solved the \Cref{conjecture} for a general $k$-partite graph $H$ and $m$ sufficiently large integer. The result is as follows.

\begin{theorem}[Pokrovskiy et al. \cite{pokrovskiy2020ramsey}]\label{cycle goodness}
For $m \geq 10^{60} m_k$ and $m_k \geq m_{k-1} \geq \cdots \geq m_1$ satisfying $m_i \geq i^{22}$, we have
\[
R(C_m, K_{m_1,\ldots,m_k}) = (m - 1)(k - 1) + m_1.
\]
\end{theorem}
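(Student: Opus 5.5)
The lower bound follows from Burr's construction, and the upper bound from a structural dichotomy for graphs whose complement is $K_{m_1,\ldots,m_k}$-free. The dichotomy is proved by induction on $k$ together with an expansion argument; $m\ge 10^{60}m_k$ provides the room in every step.

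\textbf{Lower bound.} Let $N=(m-1)(k-1)+m_1-1$. Let $\Gamma$ be the disjoint union of $k-1$ copies of $K_{m-1}$ and one copy of $K_{m_1-1}$. Every component of $\Gamma$ has fewer than $m$ vertices, so $\Gamma\not\supseteq C_m$. The complement $\overline{\Gamma}$ is complete $k$-partite with parts of sizes $m-1,\ldots,m-1,m_1-1$, and an independent set of $\overline{\Gamma}$ lies inside a single part. In a copy of $K_{m_1,\ldots,m_k}$ in $\overline{\Gamma}$, the $k$ classes are pairwise completely joined, so they lie in $k$ distinct parts. Some class would then have to fit in the part of size $m_1-1<m_1$, which is impossible. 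Hence $R(C_m,K_{m_1,\ldots,m_k})\ge N+1$.

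\textbf{Upper bound: reduction to a dichotomy.} Let $\Gamma$ have $(m-1)(k-1)+m_1$ vertices and suppose $\overline{\Gamma}\not\supseteq K_{m_1,\ldots,m_k}$. The plan is to prove the following by induction on $k$: either $\Gamma$ contains a \emph{robust expander} $X$ with $|X|\ge m$ (every $S\subseteq X$ with $|S|\le |X|/2$ has $|N_\Gamma(S)\cap X|\ge 2|S|$, after deleting $O(m_k)$ exceptional vertices), or $V(\Gamma)$ splits into pieces $V_1,\ldots,V_k$ with $|V_i|\le m-1$ for $i<k$ and very few edges between pieces.

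\textbf{Key steps.}
\begin{enumerate}
\item \emph{Expansion of small sets.} Since $\overline{\Gamma}$ has no $K_{m_1,m_k}$, any two disjoint sets of sizes $m_1$ and $m_k$ span an edge of $\Gamma$. A K\H{o}v\'ari--S\'os--Tur\'an style count then shows that sets of size at least about $m_k$ expand in $\Gamma$. The hypothesis $m_i\ge i^{22}$ controls the error terms.
\item \emph{Decomposition.} Iteratively split $\Gamma$ along sparse cuts until every remaining piece is an expander or has fewer than $m$ vertices, discarding $O(k\,m_k)$ vertices in total.
\item \emph{Exact-length cycles in expanders.} Show that an expander on at least $m$ vertices, with $m\ge 10^{60}m_k$, contains a cycle of length exactly $m$. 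The plan is to take a long path via P\'osa rotation--extension, use expansion to build short connecting paths, and build an absorbing structure that lets the cycle length be adjusted by one at a time.
\item \emph{Small-pieces case.} If no expander piece has $m$ vertices, then the vertex count forces at least $k$ essentially independent pieces. One of them has size at least about $m_1$, since the total exceeds $(m-1)(k-1)$. Sparsity between pieces lets us pick $m_i$-sets in distinct pieces with all cross pairs being non-edges of $\Gamma$, by greedy or dependent random choice. This yields $K_{m_1,\ldots,m_k}\subseteq\overline{\Gamma}$, a contradiction. For $k-1$ of the pieces this is exactly the induction hypothesis.
\end{enumerate}

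\textbf{Main obstacle.} I expect step 3 to be hardest. Expansion easily gives cycles of length at least $m$, but obtaining length \emph{exactly} $m$ requires a robust absorption or rotation argument that survives the $O(m_k)$ exceptional vertices. I would first try to prove that such expanders are pancyclic in the range $[\log |X|,\,|X|-O(m_k)]$, and then check that $m$ falls in this range. This is where the huge constant $10^{60}$ is consumed.
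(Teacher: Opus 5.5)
The paper gives no proof of this statement; it only quotes it from Pokrovskiy and Sudakov for comparison, so your proposal has to stand on its own.

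Your lower bound is correct (it is Burr's construction), but the key sentence should run the other way. Each part of $\overline{\Gamma}$ is independent, so it meets at most one class of a copy of $K_{m_1,\ldots,m_k}$, because vertices of different classes are adjacent. With $k$ classes and $k$ parts, every class then lies inside its own part, and one part has only $m_1-1$ vertices. The classes of the copy need not be independent in $\overline{\Gamma}$, so ``independent sets lie in one part'' is not the relevant fact.

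The upper bound is a roadmap rather than a proof, and it fails at two concrete places.

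\textbf{Step 1 reverses an implication.} Since $K_{m_1,m_k}\subseteq K_{m_1,\ldots,m_k}$, excluding $K_{m_1,\ldots,m_k}$ from $\overline{\Gamma}$ does not exclude $K_{m_1,m_k}$ once $k\ge 3$. The extremal graph $(k-1)K_{m-1}\cup K_{m_1-1}$ has no $C_m$ and a $K_{m_1,\ldots,m_k}$-free complement, yet its complement contains $K_{m-1,m-1}$. So sets of size about $m_k$ do not expand globally. Any expansion has to come out of the induction on $k$: for example, if $A,B$ are large with no $\Gamma$-edge between them, then $\overline{\Gamma}[B]$ must be $K_{m_1,\ldots,m_{k-1}}$-free. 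Step 2 never sets this up.

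\textbf{The scheme cannot give an exact value.} You discard $O(km_k)$ vertices and then say some piece has ``about $m_1$'' vertices. But the target $(m-1)(k-1)+m_1$ is tight to one vertex, and $m_1$ may be as small as $1$ while $km_k$ is huge. After the deletion, the count in Step 4 no longer forces $k$ pieces of sizes at least $m_1,\ldots,m_k$. Similarly, an expander piece with $m\le |X|<m+O(m_k)$ lies in neither branch, since your pancyclicity range $[\log|X|,|X|-O(m_k)]$ can miss $m$.

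The missing ingredient is a stability/absorption step. When $\Gamma$ is close to $(k-1)K_{m-1}\cup K_{m_1-1}$, every leftover vertex must either be insertable into a cycle inside a near-$(m-1)$ piece, producing $C_m$, or be placeable in the smallest class of a $K_{m_1,\ldots,m_k}$ in $\overline{\Gamma}$. Step 3 (exact-length cycles in expanders) is left unproven, and most of the difficulty lies there. Even granting it, without this absorption step the argument would at best give $R(C_m,K_{m_1,\ldots,m_k})\le (m-1)(k-1)+m_1+O(km_k)$, not the stated equality.
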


In our case, $k=2, m_{1}=2,m_{2}=n$. An improvement for complete bipartite graph in \Cref{cycle goodness} (Corollary~3.8 in \cite{pokrovskiy2020ramsey}) shows that $C_{m}$ is $K_{2,n}$-good for all $m\geq 2\times10^{49}n$ and $n\geq 8$. \Cref{main result_3} improves the bound to $m\geq 3n+4$ for $K_{2,n}$.

\section{Preliminary Results}

In this section, we introduce some preliminary results that will be used to prove our main results. We start with the following two results by Dirac \cite{dirac} on the length of the largest cycle present in a graph $G$.

\begin{lemma}[Dirac \cite{dirac}]\label{Dirac}
    If $G$ is a $2$-connected graph which satisfies $$|N_G(u)|+|N_G(v)|\geq k$$ for all non adjacent $u,v\in V(G)$, then $G$ contains a cycle of length at least $k$.
\end{lemma}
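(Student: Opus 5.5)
We argue by extremal choice of a longest path, in the spirit of the Erd\H{o}s--Gallai and Bondy proofs. We read the statement, as is standard, with $k\le |V(G)|$; if $G$ is complete the condition is vacuous and $G$ is Hamiltonian. If $G$ is not complete, pick a longest path $P=v_0v_1\cdots v_p$, among all longest paths one whose endpoints are non-adjacent if possible. By maximality, $N_G(v_0)\cup N_G(v_p)\subseteq V(P)$. Put $A=\{i: v_0v_i\in E\}$ and $B=\{i: v_pv_i\in E\}$.

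\emph{Crossing case.} Suppose there is an index $i$ with $i+1\in A$ and $i\in B$ (this includes $v_0v_p\in E$). Then $v_0v_{i+1}\cdots v_pv_i v_{i-1}\cdots v_0$ is a cycle $C$ on all of $V(P)$. If $V(C)\ne V(G)$, connectivity gives a vertex outside $C$ adjacent to $C$, and opening $C$ there yields a path longer than $P$, a contradiction. So $C$ is Hamiltonian, of length $|V(G)|\ge k$. This also shows that if $G$ has no Hamiltonian cycle, no longest path has adjacent endpoints. Hence in the remaining case $v_0v_p\notin E$, and the hypothesis gives $|A|+|B|=d(v_0)+d(v_p)\ge k$.

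\emph{Non-crossing case.} Now no such $i$ exists. Let $a=\max A$ and $b=\min B$. First suppose $b<a$, so the neighbourhoods interleave. Choose $i\in B$ and $j\in A$ with $i<j$ and $j-i$ minimal. Then $v_0v_j\cdots v_p v_i v_{i-1}\cdots v_0$ is a cycle that omits only $v_{i+1},\dots,v_{j-1}$. By the minimality of $j-i$ and the non-crossing condition, the omitted indices contain no element of $A$ and no shifted element of $B$. A counting argument, comparing $A$ with $B+1$ inside $\{1,\dots,p\}$, then shows the cycle has length at least $|A|+|B|\ge k$. If instead $a\le b$, we use $2$-connectivity. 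The vertex $v_a$ (say) is not a cut vertex, so there is a path $Q$, internally disjoint from $P$, from some $v_s$ with $s<a$ to some $v_t$ with $t>a$. Choose $Q$ so that $s$ is as small and $t$ as large as possible. Splicing $Q$ with the chords $v_0v_{a'}$ ($a'\in A$, $a'>s$) and $v_pv_{b'}$ ($b'\in B$, $b'<t$) gives a cycle containing $v_0$, $v_p$ and a stretch of $P$ that contains all of $A$ or all of $B$. Its length is then at least $(a+1)+(p-b+1)\ge |A|+|B|\ge k$ in the main configuration. Further splicing handles the configuration where $Q$ lands between $A$ and $B$ (iterating along a chain of such bypass paths if necessary).

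The crossing case and the interleaving subcase are routine bookkeeping. The main obstacle is the non-interleaving subcase $a\le b$, where the $2$-connectivity path must be chosen carefully, by extremal endpoints, so that the resulting cycle picks up at least $|A|$ vertices on the $v_0$ side and $|B|$ on the $v_p$ side. There I would follow Bondy's argument: take the path $Q$ whose endpoints lie furthest apart along $P$, and use it to join the segment $v_0\cdots v_a$ to the segment $v_b\cdots v_p$.
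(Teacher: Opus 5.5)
The paper gives no proof of this lemma; it only cites it, so your argument has to stand on its own. You are right that the statement needs $k\le |V(G)|$ (for $G=K_n$ the hypothesis is vacuous). Your set-up, the crossing case and the interleaving case are fine. There is one slip in the interleaving count: the omitted indices $i+1,\dots,j-1$ do contain the shifted element $i+1\in B+1$. The count still works, because $A$ and $(B+1)\setminus\{i+1\}$ are disjoint sets of nonzero indices kept on the cycle, so the length $p+2-(j-i)$ is at least $|A|+|B|$.

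The genuine gap is the case $\max A\le\min B$, which is where the content of the theorem lies.

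Even in your ``main configuration'' ($s<a$, $t>b$) the bound $(a+1)+(p-b+1)$ is not justified. The spliced cycle $v_0\cdots v_s\,Q\,v_t\cdots v_p\,v_{b'}v_{b'-1}\cdots v_{a'}v_0$ skips $v_{s+1},\dots,v_{a'-1}$ and $v_{b'+1},\dots,v_{t-1}$. The correct count takes $a'$ to be the least element of $A$ above $s$ and $b'$ the largest element of $B$ below $t$. Then the cycle contains $v_0$, $v_p$ and every $v_i$ with $i\in A\cup B$. Since $|A\cap B|\le 1$, its length is at least $|A|+|B|+1$.

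More seriously, the configuration in which the bypass of $v_a$ lands at some $t\le b$ is dismissed with ``further splicing'' and a pointer to Bondy. That is exactly the step requiring proof: it is not clear how several bypass paths close up into \emph{one} cycle that still contains all of $A$ and all of $B$.

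Here is one way to close it inside your framework.

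\emph{Building the chain.} Put $t_0=a$. As long as $t_j\le b$, the vertex $v_{t_j}$ is not a cut vertex. So there is a path $Q_{j+1}$, internally disjoint from $P$, from some $v_{s_{j+1}}$ with $s_{j+1}<t_j$ to some $v_{t_{j+1}}$ with $t_{j+1}>t_j$. Choose it with $t_{j+1}$ maximal; only this maximality matters, and minimising $s$ simultaneously is not well posed anyway.

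\emph{Properties of the chain.} Maximality of $t_j$ forces $s_{j+1}\ge t_{j-1}$. Maximality of the earlier $t_i$ forces $Q_{j+1}$ to be internally disjoint from $Q_1,\dots,Q_j$: otherwise an initial piece of some $Q_i$ followed by a final piece of $Q_{j+1}$ would beat $t_i$. Stopping at the first $r$ with $t_r>b$ gives
$s_1<t_0\le s_2<t_1\le s_3<\dots\le s_r<t_{r-1}\le b<t_r$.

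\emph{Closing the cycle.} Write $P[i,j]$ for $v_i\cdots v_j$. Let $a'$ be the least element of $A$ above $s_1$ and $b'$ the largest element of $B$ below $t_r$. Take the chords $v_0v_{a'}$ and $v_pv_{b'}$, the paths $Q_1,\dots,Q_r$, and the segments $P[0,s_1]$, $P[a',s_2]$, $P[t_j,s_{j+2}]$ for $1\le j\le r-2$, $P[t_{r-1},b']$ and $P[t_r,p]$. For $r=1$, replace $P[a',s_2]$ and $P[t_0,b']$ by $P[a',b']$. These pieces are pairwise disjoint by the ordering above, and they form a single cycle: it runs forward through the odd-indexed $Q_j$ and returns through the even-indexed ones. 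Since $A\subseteq P[0,s_1]\cup P[a',s_2]$ and $B\subseteq P[t_{r-1},b']\cup P[t_r,p]$, the cycle contains $v_0$, $v_p$ and all $v_i$ with $i\in A\cup B$. Hence its length is at least $|A|+|B|+1>k$.

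Without an argument of this kind, the proposal does not prove the lemma.
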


\begin{lemma}[Dirac \rm\cite{dirac}]\label{Dirac_2}
Let $G$ be a simple graph on $n \geq 3$ vertices. If $\delta(G) \geq \frac{n}{2}$, then $G$ is Hamiltonian.
\end{lemma}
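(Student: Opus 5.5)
We would prove \Cref{Dirac_2} by the classical longest-path argument. First we observe that $G$ is connected: if $u,v$ are nonadjacent, then $|N_G(u)|+|N_G(v)|\geq n > |V(G)\setminus\{u,v\}|$. Hence $u$ and $v$ have a common neighbour, and in fact $\operatorname{diam}(G)\le 2$.

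Next, let $P=v_0v_1\cdots v_k$ be a longest path in $G$. By maximality, every neighbour of $v_0$ and every neighbour of $v_k$ lies on $P$, since otherwise $P$ could be extended. Define the index sets
\[
S=\{\,i\in\{0,\ldots,k-1\} : v_0v_{i+1}\in E(G)\,\},\qquad T=\{\,i\in\{0,\ldots,k-1\} : v_iv_k\in E(G)\,\}.
\]
Then $|S|=|N_G(v_0)|\geq n/2$ and $|T|=|N_G(v_k)|\geq n/2$. Both sets lie in $\{0,\ldots,k-1\}$, which has $k\leq n-1<n\le |S|+|T|$ elements, so there is an index $i\in S\cap T$. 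For this $i$ the closed walk
\[
v_0v_{i+1}v_{i+2}\cdots v_kv_iv_{i-1}\cdots v_1v_0
\]
is a cycle $C$ through all vertices of $P$. The degenerate cases are harmless: if $i=k-1$ this just says $v_0v_k\in E(G)$, and $k\ge 2$ because $\delta\ge n/2\ge 3/2$.

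Finally, suppose $C$ misses some vertex. Since $G$ is connected, some vertex $w\notin V(C)$ is adjacent to some $v_j\in V(C)$. Opening $C$ at $v_j$ and prepending $w$ then yields a path on $k+2$ vertices, contradicting the maximality of $P$. Hence $V(C)=V(G)$, and $C$ is a Hamiltonian cycle.

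The only step requiring care is the pigeonhole step producing $i\in S\cap T$. The essential point is that the index shift by one in the definition of $S$ keeps both sets inside $\{0,\ldots,k-1\}$, which has only $k\le n-1$ elements, while their sizes add up to at least $n$. Everything else is routine.
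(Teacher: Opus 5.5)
Your proof is correct and complete. It is the classical longest-path argument, and you handle every point that needs care:
\begin{itemize}
\item the index shift that places both $S$ and $T$ inside $\{0,\ldots,k-1\}$, so that $|S|+|T|\ge n>k$ forces $S\cap T\neq\emptyset$;
\item the degenerate choices $i=0$ and $i=k-1$;
\item the bound $k\ge 2$, which makes the closed walk a genuine cycle;
\item the use of maximality of $P$, together with connectivity, to exclude vertices off $C$.
\end{itemize}

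The paper gives no proof of this lemma; it quotes it from Dirac and uses it as a black box, so there is no argument to compare yours against. As a side remark, you use the degree hypothesis only through $|N_G(v_0)|+|N_G(v_k)|\ge n$ and through degree sums of nonadjacent pairs. So essentially the same argument, with the trivial extra case $v_0v_k\in E(G)$ added, also proves Ore's theorem.
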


Nash and Williams obtained a sufficient condition of Hamiltonicity for $2$-connected graphs. Observe that the bound on the minimum degree in the following result improves \Cref{Dirac_2} due to an extra condition of $2$-connectedness. Here, $\alpha(G)$ denotes the independence number of the graph $G$.

\begin{lemma}[Nash-Williams \cite{nash1971Hamiltonian}]\label{Thm:Nash-william}
    Let $G$ be a $2$-connected graph of order $n$ with $\delta(G)\geq \max\{(n+2)/3,\alpha(G)\}$. Then $G$ is Hamiltonian.
\end{lemma}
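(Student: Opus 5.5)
Although \Cref{Thm:Nash-william} is quoted from the literature, here is how I would prove it. The plan is the classical longest-cycle argument, in two stages. First, show that under the degree hypothesis a longest cycle $C$ of $G$ is a \emph{dominating} cycle, i.e.\ $G-V(C)$ is an independent set. Second, use the condition $\delta(G)\geq \alpha(G)$ to rule out even a single vertex outside $C$.

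I begin with a length bound. Since $G$ is $2$-connected and every pair of vertices (adjacent or not) has degree sum at least $2\delta(G)$, \Cref{Dirac} yields a cycle of length at least $\min\{n,2\delta(G)\}$. Hence a longest cycle $C$ satisfies
\[
|C|\geq \min\{n,\tfrac{2(n+2)}{3}\}.
\]
If $|C|=n$ we are done, so assume not. Orient $C$, and for $w\in V(C)$ write $w^+$ for its successor on $C$.

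\textbf{Stage 1 (the main obstacle).} Suppose some component $H$ of $G-V(C)$ contains an edge $uv$. Let $A\subseteq V(C)$ be the set of vertices of $C$ with a neighbour in $H$; by $2$-connectedness, $|A|\geq 2$.

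The maximality of $C$ gives gap conditions between consecutive vertices $a,a'$ of $A$ along $C$. The arc between them has at least one interior vertex, since otherwise rerouting through $H$ lengthens $C$. If $a$ and $a'$ attach to distinct vertices of $H$, the arc is at least as long as a connecting path through $H$ plus one, so it has at least two interior vertices. Standard exchange arguments also forbid edges among the successors $a^+$ for $a\in A$.

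I would then count, with $h=|V(H)|$. Every vertex of $H$ has degree at most $(h-1)+|A|$, giving $\delta\leq h-1+|A|$. The gap conditions give roughly $|C|\geq 2|A|$, improved to about $3|A|$ once one accounts for attachments coming from different vertices of $u$ and $v$. This should lead to
\[
n\geq |C|+h\geq 3\delta - O(1).
\]
Calibrating the constant to contradict $3\delta\geq n+2$ is the delicate part. I would first try a careful count using the two endpoints $u,v$ of the edge separately, as in Bondy's proof of this lemma, before falling back on a case split according to whether $u$ and $v$ have common attachment vertices on $C$.

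\textbf{Stage 2.} Now $G-V(C)$ is independent and nonempty. Pick $x\notin V(C)$. All $d(x)\geq\delta$ neighbours of $x$ lie on $C$; call them $x_1,\dots,x_k$. The successors $x_i^+$ are not adjacent to $x$, since otherwise $C$ extends through $x$. They are also pairwise nonadjacent: if $x_i^+x_j^+\in E(G)$, then the cycle
\[
x\,x_i\,\overleftarrow{C}\,x_j^+\,x_i^+\,\overrightarrow{C}\,x_j\,x
\]
is longer than $C$. Thus $\{x,x_1^+,\dots,x_k^+\}$ is an independent set of size at least $\delta+1>\alpha(G)$, a contradiction. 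Hence $C$ is Hamiltonian.
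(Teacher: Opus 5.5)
The paper does not prove this lemma (it is quoted from Nash-Williams), so your argument has to stand on its own. Your Stage 2 is correct and is the standard finish. Stage 1, however, has a genuine gap. It is the only place where $\delta\ge (n+2)/3$ is used, so it is the real content of the theorem, and you do not carry it out.

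The count you sketch cannot be calibrated to work. From $\delta\le h-1+|A|$ and $|C|\gtrsim 3|A|$ you only get
$n\ge |C|+h\ge 3(\delta-h+1)+h=3\delta-2h+3$.
So the error term is $2h-3$, not $O(1)$, and this contradicts $n\le 3\delta-2$ only when $h=2$. The bound $|C|\gtrsim 3|A|$ for the whole attachment set is not justified either: if most attachments come from one vertex of $H$, consecutive attachments can be at distance two.

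Concretely, let $H$ be a clique on $h\ge 3$ vertices, all attaching to the same set $W\subseteq V(C)$. Gaps of one or two interior vertices give only $|C|\ge 3|W|$ with $|W|\ge\delta-h+1$, which is consistent with $3\delta\ge n+2$. What actually rules this configuration out is that consecutive attachments are then at least $h$ apart. The loss in your degree bound grows with $h$, so the gaps you exploit must grow with $h$ as well.

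The missing idea is to replace the edge $uv$ by a longest path $P=u\cdots v$ in $H$, with $p\ge 2$ vertices.
\begin{itemize}
\item Maximality of $P$ gives $N_H(u),N_H(v)\subseteq V(P)$, hence $d_C(u),d_C(v)\ge D:=\delta-p+1$.
\item Let $Z=N_C(u)\cup N_C(v)$ and take cyclically consecutive $z,z'\in Z$. If both lie in $N_C(u)\setminus N_C(v)$, or both in $N_C(v)\setminus N_C(u)$, the arc between them has at least one interior vertex. Otherwise it has at least $p$ interior vertices, since else rerouting through $z\,u\,P\,v\,z'$ lengthens $C$.
\item Hence $|C|\ge 2|Z|+(p-1)m$, where $m$ counts consecutive pairs not lying in the same private part. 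Here $m$ equals $|N_C(u)\cap N_C(v)|$ plus the number of runs of private vertices.
\item A short case check then gives $|C|\ge \min\{(p+1)D,\;4D+2p-2\}$.
\end{itemize}
Your length bound gives $|C|\ge 2\delta$, and together with $n\le 3\delta-2$ this yields $p\le h\le n-|C|\le \delta-2$. Then $n\ge |C|+p$ gives $n\ge 3\delta-1$ in both cases, because
$(p+1)D+p-(3\delta-1)=(p-2)(\delta-p-1)\ge 0$ and $4D+3p-2=4\delta-p+2\ge 3\delta+4$.
This contradicts $3\delta\ge n+2$. So $G-V(C)$ is independent, and your Stage 2 completes the proof.
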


 A graph is called \emph{weakly pancyclic} if it contains cycles of all possible lengths between its girth and circumference. Weakly pancyclic graphs are often useful in the study of Ramsey numbers involving cycles. The following is a sufficient condition for a $2$-connected graphs to be weakly pancyclic.
 
\begin{lemma}[Brandt et al. \cite{brandt1998weakly}]\label{Thm:Weakpan-Brandt_nby4_250_1}
    Let $G$ be a $2$-connected non-bipartite graph of order $n$ with minimum degree $\delta(G) \geq  n/4 + 250$. Then $G$ is weakly pancyclic unless $G$ has odd girth $7$, in which case it has every cycle from $4$ up to its circumference except the $5$-cycle.
\end{lemma}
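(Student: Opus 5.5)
Since \Cref{Thm:Weakpan-Brandt_nby4_250_1} is quoted from Brandt et al.\ and used as a black box later, I only sketch how I would reprove it. The statement has two parts: a bound on the odd girth, and a ``no gaps'' argument.

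\textbf{Step 1 (odd girth).} I would first bound the odd girth of $G$. Suppose $C$ is a shortest odd cycle, of length $2k+1$. Then every vertex outside $C$ has at most two neighbours on $C$, since three neighbours would create a shorter odd cycle. Double counting the edges between $C$ and $V(G)\setminus V(C)$, together with $\delta(G)\ge n/4+250$, then gives
\[
(2k+1)(\delta-2)\le 2n .
\]
This forces $2k+1\le 7$, so the odd girth lies in $\{3,5,7\}$.

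\textbf{Step 2 (structure for odd girth $5$ and $7$).} For odd girth $5$ or $7$ I would apply the Häggkvist / Jin type structure theorems for graphs of high minimum degree and given odd girth. Odd girth $7$ together with $\delta>n/4$ should force $G$ to be homomorphic to $C_7$ (an Andrásfai-type conclusion). In that case $C_5$ is absent, because a homomorphic image of $C_5$ in $C_7$ would be a closed walk of odd length less than $7$ in $C_7$, which does not exist. This step accounts for the exceptional clause of the lemma.

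\textbf{Step 3 (no gaps).} The core is to show that every length from the girth (or from $4$, in the odd girth $7$ case) up to $c(G)$ occurs. I would combine two operations.
\begin{enumerate}[(a)]
\item \emph{Shortening.} Take a cycle $C$ of length $\ell>g(G)$ and a vertex of $C$ with many neighbours on $C$. Under the degree bound, chords to positions at circular distance $2$ or $3$ yield a cycle of length $\ell-1$ or $\ell-2$.
\item \emph{Growing.} Starting from short cycles, use $2$-connectivity and the degree condition to reroute through outside vertices. This adds one vertex at a time, in the spirit of the proof of \Cref{Dirac}.
\end{enumerate}
Parity is handled by running one chain of lengths through a shortest odd cycle and one through even cycles, and then merging the two chains near the top using a longest cycle.

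\textbf{Main obstacle.} I expect the hardest part to be Step 3(a), the shortening step, for cycles whose length is comparable to $n/4$. There a single vertex need not have enough neighbours on $C$ for pigeonhole to produce the right chords. My first attempt would be a counting argument on a pair of consecutive vertices of $C$ whose combined neighbourhood is large, with the additive constant $250$ absorbing the error terms. If no suitable chord exists, the neighbourhoods on $C$ must be arranged periodically. That periodic structure should force either a bipartite-like structure, contradicting non-bipartiteness, or a $C_7$ blow-up, which is exactly the exceptional case of Step 2.
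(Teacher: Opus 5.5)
\textbf{There is a genuine gap: your sketch does not establish the main content of the lemma.} The paper gives no proof of this lemma. It is quoted from Brandt, Faudree and Goddard and used only as a black box in the proof of \Cref{main result_3}, so your sketch has to stand on its own.

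Step~1 is correct. A shortest odd cycle of length $2k+1\ge 5$ is induced, and every outside vertex has at most two neighbours on it. So $(2k+1)(\delta-2)\le 2(n-2k-1)$ does force $2k+1\le 7$. Step~2 is correct for odd girth $7$: the H\"aggkvist--Jin theorem (odd girth at least $7$ and $\delta>n/4$ imply a homomorphism to $C_7$) excludes $C_3$ and $C_5$.

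For odd girth $5$, however, there is no structure theorem of the kind you invoke at $\delta\approx n/4$. Triangle-free graphs with minimum degree close to $n/3$ can have unbounded chromatic number, so that case rests entirely on Step~3. More generally, Steps~1 and~2 only explain which lengths are \emph{absent}. Everything the lemma asserts about lengths being \emph{present} is deferred to Step~3, including $C_4,C_6,C_7,\dots,C_{c(G)}$ in the exceptional case. Step~3 is a hope rather than an argument.

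Step~3(a) rests on a false premise. The hypothesis bounds $d_G(v)$, not the number of neighbours of $v$ on $C$. For a non-Hamiltonian cycle of any length, not just near $n/4$, a vertex of $C$ may have almost all its neighbours off $C$. Even a vertex $v_i$ adjacent to every vertex of $C$ except $v_{i\pm2},v_{i\pm3}$ has none of the chords you need. Shortening by exactly one requires crossing-chord configurations or detours through vertices off $C$. It also requires density information on $G[V(C)]$ that the global degree bound does not give you.

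Step~3(b) is equally unsupported. Dirac-type rerouting yields \emph{some} longer cycle, not one exactly one or two longer. So neither of your parity chains is shown to advance in steps of $2$, and the ``merge near the top'' is unspecified.

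Your fallback also fails. You argue that periodic neighbourhoods on $C$ force a bipartite-like structure, contradicting non-bipartiteness. This produces no contradiction: periodicity is a property of $G[V(C)]$ only, whereas the odd cycles of $G$ (of length at most $7$ by Step~1) may lie entirely elsewhere.

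The missing idea is a mechanism, using $2$-connectivity and the degree bound, for routing a short odd cycle into long cycles. That mechanism is what would realise both parities at every length from $g(G)$ to $c(G)$. It is where the substance of the original theorem lies, and your sketch does not supply it.
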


The following is one of the key results applied in one of the subcases arising in the proof of \Cref{Hamiltonian lemma}. For its proof, we refer the reader to Lemma~3.1 in \cite{dewan2026ramsey}.

\begin{lemma}[Dewan et al. \cite{dewan2026ramsey}]\label{lem:cycle_lemma}
Let $G$ be a $2$-connected graph on $n$ vertices such that
\[
|(N_G(u)\cup N_G(v))\setminus\{u,v\}|\geq k
\]
for all $u,v\in V(G)$. If $c(G)\geq n-k$, then
\[
c(G)\geq \min\{2k-2, n-1\}.
\]
\end{lemma}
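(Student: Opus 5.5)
We argue by contradiction. Suppose $c=c(G)$ satisfies $n-k\le c<\min\{2k-2,n-1\}$, and fix a longest cycle $C$ with a cyclic orientation, writing $x^{+}$ and $x^{-}$ for successor and predecessor on $C$. Put $R=V(G)\setminus V(C)$. Since $c\le n-2$ we have $|R|\ge 2$, and since $c\ge n-k$ we have $|R|\le k$. Let $H$ be a component of $G[R]$. By $2$-connectivity, $H$ has at least two attachment vertices $x_1,\dots,x_t$ on $C$, listed in cyclic order.

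We first collect the standard facts about a longest cycle. For any $u\in V(H)$, no two consecutive vertices of $C$ both lie in $N(u)$. More generally, for $i\ne j$ the successors $x_i^{+}$ and $x_j^{+}$ are nonadjacent, have no neighbour in $H$, and admit no ``crossing'' chords: we cannot have $x_i^{+}w,\ x_j^{+}w^{+}\in E(G)$ for $w$ on the appropriate arc. Otherwise we could reroute $C$ through a path in $H$ and obtain a longer cycle. The same rerouting shows that each arc $[x_i,x_{i+1}]$ has at least as many interior vertices as the longest $x_i$--$x_{i+1}$ path through $H$ has internal vertices. Also, a neighbour of $x_i^{+}$ outside $C$ cannot lie in $H$; the case of another component $H'$ needs an analogous rerouting check.

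The counting step uses the hypothesis on two pairs. Take $a=x_1^{+}$ and $b=x_2^{+}$, and set $S=(N(a)\cup N(b))\setminus\{a,b\}$, so $|S|\ge k$. Following Bondy's proof of Dirac's theorem, we map each $w\in S\cap V(C)$ to $w^{-}$ or $w^{+}$, according to whether $w$ is a neighbour of $a$ or of $b$ and which arc between $a$ and $b$ it lies on. The no-crossing property makes this map injective. Its image also avoids $S$ on a set of at least about $|S\cap V(C)|-2$ positions, since consecutive-type configurations give longer cycles. This should yield $c\ge 2|S\cap V(C)|-2$. It remains to handle the neighbours of $a,b$ in $R\setminus V(H)$. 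Each such neighbour costs one from $|S\cap V(C)|$, but it also produces a further component attached at $a$ or $b$. That component can be charged by repeating the gap argument with a second pair $u\in V(H)$ and $a$, where again $|(N(u)\cup N(a))\setminus\{u,a\}|\ge k$. Combining the two estimates with $|R|=n-c\le k$ should give $c\ge 2k-2$, the desired contradiction.

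The main obstacle is this last accounting. The hypothesis bounds only a union of neighbourhoods, not each degree separately, so neighbours lying off $C$ (in $R$) reduce the usable count on $C$. The whole proof hinges on showing that such off-cycle neighbours force correspondingly long gaps on $C$, via the assumption $c\ge n-k$, so that exactly $2k-2$ survives. I would first attempt it in the case where $G[R]$ is connected, and then reduce the general case to that one by choosing $H$ as the component with the most attachment vertices.
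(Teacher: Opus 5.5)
\textbf{The central counting step does not hold, so this is a genuine gap.} For your pair $a=x_1^{+}$, $b=x_2^{+}$, the inequality $c\ge 2|S\cap V(C)|-2$ is not merely unproved; it is false. Better bookkeeping of off-cycle neighbours therefore cannot complete this route.

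Bondy's shift argument shows that on each arc $N(b)$ avoids the shift of $N(a)$. That bounds $d_C(a)+d_C(b)\le c$, which is what a degree-sum theorem needs. To bound the \emph{union} $S$ by about $c/2$, you would need $S$ and its shift to be nearly disjoint, i.e.\ $a$ and $b$ have almost no consecutive neighbours on $C$. Nothing forbids a vertex \emph{on} $C$ from being adjacent to a long run of consecutive cycle vertices.

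Concretely, take $G=K_2\vee(K_p\cup K_p\cup K_p)$ with $K_2=\{s,t\}$ and $p\ge 3$. It is $2$-connected with $n=3p+2$. The hypothesis holds with $k=p$ (attained by two vertices of one clique). A longest cycle $C=s\,Q_1\,t\,Q_2\,s$ through two of the cliques has $c=2p+2=n-k$, so every hypothesis of the lemma is met. The third clique is the only component $H$ of $G-V(C)$, and its attachment vertices are $s,t$. Hence $a,b$ are the first vertices of $Q_1,Q_2$, and
$S=\{s,t\}\cup(Q_1\setminus\{a\})\cup(Q_2\setminus\{b\})$ lies on $C$ with $|S|=2p$. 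Your inequality would then give $2p+2\ge 4p-2$, which is false.

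Note also that in your sketch the hypothesis $c\ge n-k$ does no real work beyond recording $|R|\le k$. That is a sign the hypothesis is being applied to the wrong pair.

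\textbf{The fix is to apply the hypothesis to two vertices of $R$.} There the longest-cycle property does forbid consecutive neighbours. This is exactly what the paper does in Claim 1 of the proof of Theorem~\ref{Hamiltonian lemma}; the lemma itself is only cited here.

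\emph{Case 1: $R$ is independent.} Take any $x\ne y$ in $R$. Then $S=N(x)\cup N(y)$ lies on $C$ and $|S|\ge k$. Let $S^{+}=\{w^{+}:w\in S\}$. By (1) and (2) of Lemma~\ref{observations}, $S\cap S^{+}$ has at most two elements: at most one $w$ with $w^{-}\in N(x)$ and $w\in N(y)$, and at most one with $x,y$ swapped. Hence $c\ge |S\cup S^{+}|\ge 2k-2$, which is exactly where the $-2$ comes from.

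\emph{Case 2: some component $H$ of $G-V(C)$ has an edge.} Let $P$ be a longest path in $H$, with $p\ge 2$ vertices. Now $|R|=n-c\le k$ is what matters. Two vertices of $R$ with no neighbour on $C$ would give $|S|\le |R|-2<k$, which is impossible. So, using $2$-connectivity and one rotation of $P$ as in Claim 1, you may assume both ends $x,y$ of $P$ have neighbours on $C$.

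Then $S\cap R\subseteq V(P)\setminus\{x,y\}$. Since $p\le |R|\le k$, this gives $|S\cap V(C)|\ge k-p+2\ge 2$. Any distinct $a'\in N_C(x)$ and $b'\in N_C(y)$ are separated on both arcs by at least $p$ interior vertices. Counting gaps then gives $c\ge 2|S\cap V(C)|+2(p-1)\ge 2k+2$.
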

It is an easy exercise to show that a graph $G$ is $K_{2,n}$-free if and only if for each $u,v\in V(G)$, $$|N_G(u)\cap N_G(v)|\leq n-1.$$
If a graph $G$ on $m$ vertices does not contain a $K_{2,n}$, then using the previous property we conclude that for each $u,v\in V(\overline{G})$, $$|(N_{\overline{G}}(u)\cup N_{\overline{G}}(v))\setminus \{u,v\}|\geq (m-2)-(n-1)=m-n-1.$$ We use this property frequently in our proofs. The following is another observation on $K_{2,n}$-free graphs which is used in the proof of \Cref{main theorem_2}
.
\begin{lemma}\label{2-connected lemma}
    Let $m\geq 2n+2$. If $G$ is a graph on $m+1$ vertices such that $ G\nsupseteq K_{2,n}$ and  $\overline{G}\nsupseteq C_{m}$, then $\overline{G}$ is $2$-connected. 
\end{lemma}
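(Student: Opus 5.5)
We argue by contradiction: if $\overline{G}$ is not $2$-connected, a small side of a cut forces a large complete bipartite subgraph in $G$, which pins that side down to a single vertex; deleting that vertex leaves an $m$-vertex graph to which \Cref{Dirac_2} gives a $C_m$. Throughout, write $H=\overline{G}$, so $|V(H)|=m+1$.

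\textbf{Step 1: a cut gives $K_{2,n}$ unless one side is a single vertex.} Suppose $H$ is not $2$-connected. Then there is a set $S\subseteq V(H)$ with $|S|\le 1$ (empty if $H$ is disconnected, a cut vertex $s$ otherwise) such that $V(H)\setminus S$ splits into two nonempty parts $A$ and $B$ with no $H$-edges between them. We may take $|A|\le |B|$. Every pair $(x,y)$ with $x\in A$, $y\in B$ is an edge of $G$. Since $|A|+|B|\ge m\ge 2n+2$, we get $|B|\ge n+1$. If $|A|\ge 2$, then two vertices of $A$ together with $n$ vertices of $B$ span a $K_{2,n}$ in $G$, which is impossible. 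Hence $A=\{a\}$ is a single vertex, and $a$ is $G$-adjacent to every vertex of $B=V(H)\setminus(S\cup\{a\})$.

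\textbf{Step 2: bound degrees in $G-a$.} Let $H'=H-a$, a graph on exactly $m$ vertices. We show $H'$ has large minimum degree.
\begin{itemize}
\item Take $x\in B$. Every $G$-neighbour of $x$ inside $B\setminus\{x\}$ is a common $G$-neighbour of $x$ and $a$. By the $K_{2,n}$-free property, $|N_G(a)\cap N_G(x)|\le n-1$. So $x$ has at most $n-1$ $G$-neighbours in $B$, plus possibly $s$, for at most $n$ $G$-neighbours in $V(H')$.
\item If $S=\{s\}$, the $G$-neighbours of $s$ inside $V(H')$ all lie in $B\subseteq N_G(a)$, so they are common neighbours of $s$ and $a$. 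Hence there are at most $n-1$ of them.
\end{itemize}
Either way, every vertex of $H'$ has at most $n$ $G$-neighbours in $V(H')$, so
\[
\delta(H')\ge (m-1)-n\ge \frac{m}{2},
\]
where the last inequality is exactly the hypothesis $m\ge 2n+2$.

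\textbf{Step 3: conclude.} Since $m\ge 3$, \Cref{Dirac_2} shows that $H'$ is Hamiltonian. This gives a $C_m$ in $H'\subseteq \overline{G}$, contradicting $\overline{G}\nsupseteq C_m$. Therefore $\overline{G}$ is $2$-connected.

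The only delicate point is Step 2: we must account separately for the possible cut vertex $s$, which need not be $G$-adjacent to $a$. This is handled by the observation that all of $s$'s $G$-neighbours in $H'$ lie in $B\subseteq N_G(a)$.
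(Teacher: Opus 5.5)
Your proof is correct and follows the paper's approach: if $\overline{G}$ is not $2$-connected, one side of the separation must be a single vertex $a$ that is $G$-adjacent to everything on the other side, and \Cref{Dirac_2} applied to the remaining $m$ vertices then yields a $C_m$. Your treatment of the cut-vertex case is more careful than the paper's one-line remark. The paper asserts that $\overline{G}\setminus\{v\}$ is Hamiltonian for the cut vertex $v$, which cannot hold because that graph is disconnected; deleting $a$ instead and bounding the $G$-degree of $s$ through $N_G(a)$, as you do, is exactly what that case needs.
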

\begin{proof}
     If possible, let $\kappa(\overline{G})=0$. Then $\overline{G}$ is disconnected. Let $A$ be the smallest component of $\overline{G}$. This implies $|V(\overline{G})\setminus A|\geq (m+1)/2>n$. If $|A|\geq 2$, then we have a $K_{2,n}$ in $G$ which is a contradiction. So let $|A|=1$ and $A=\{a\}$. Then $a$ is adjacent to $(V(G)\setminus \{a\})=B$ in $G$. Consequently, for each $b\in B$,
    \begin{equation*}
        |N_{\overline{G}}(b)|\geq \left(m-1\right)-(n-1)\geq \frac{m}{2}.
    \end{equation*}
    As a result, $\overline{G}[B]$ is Hamiltonian by \Cref{Dirac_2} and contains a $C_{m}$, which is again a contradiction.
    
    Thus, we let $\kappa(\overline{G})=1$. That is, there exists a vertex $v$ such that $\overline{G}\setminus \{v\}$ is disconnected. However, we obtain a similar contradiction by showing that $\overline{G}\setminus \{v\}$ is Hamiltonian, which concludes that $\overline{G}$ is $2$-connected.
\end{proof}

\section{Proof of the main result}

Before proceeding to the main result, we prove some crucial observations regarding the longest cycle $C$ in a graph $G$. It shows that if $x$ and $y$ are two vertices outside the cycle, then there are certain restrictions on the adjacency of these vertices on $C$. The result is as follows.

 \begin{lemma}\label{observations}
  Let $C= v_{1}v_{2}\ldots v_{l}v_{1}$ be the longest cycle in $G$ and $X$ be the set of the remaining vertices outside $C$ with $|X|\geq 2$. Then for any two $x,y\in X$, we have the following:
      \begin{enumerate}
         \item $x$ (or $y)$ cannot be adjacent to any two consecutive vertices in $C$.

         \item If $x$ is adjacent to $v_{i}$ and $v_{j}$, then $y$ cannot be adjacent to $v_{i+1}$ and $v_{j+1}$ in $G$ simultaneously. Also $y$ cannot be adjacent to $v_{i-1}$ and $v_{j-1}$ in $G$ simultaneously.

          \item If $x$ is adjacent to $v_{i}$ and $v_{j}$, then $y$ cannot be adjacent to $v_{i+1}$ and $v_{j+2}$ in $G$ simultaneously. Also $y$ cannot be adjacent to $v_{i-1}$ and $v_{j-2}$ in $G$ simultaneously.

      \end{enumerate}
  \end{lemma}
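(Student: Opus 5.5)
The plan is to argue every part by contradiction: assuming the forbidden adjacency pattern, I will exhibit a cycle in $G$ longer than $l$, contradicting the maximality of $C$. Indices are taken modulo $l$ throughout. Parts (1) and (2) are classical insertion and rerouting arguments. Part (3) needs a different device, since no longer cycle can be built directly; there I expect to show instead that the pattern forces a cycle of the same length $l$ that misses a vertex with two consecutive neighbours, which part (1) rules out.

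\textbf{Part (1).} Suppose $x$ is adjacent to $v_i$ and $v_{i+1}$. Replacing the edge $v_iv_{i+1}$ of $C$ by the path $v_i x v_{i+1}$ gives a cycle on $l+1$ vertices, which is a contradiction. The same argument applies to $y$.

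\textbf{Part (2).} Suppose $x\sim v_i,v_j$ and $y\sim v_{i+1},v_{j+1}$ with $i\ne j$. Consider the cycle
\[
v_i\, x\, v_j\, v_{j-1}\cdots v_{i+1}\, y\, v_{j+1}\, v_{j+2}\cdots v_i .
\]
This uses the arc $[v_{i+1},v_j]$ traversed backwards, the arc $[v_{j+1},v_i]$ traversed forwards, and the two new vertices $x,y$. Together these cover all of $V(C)\cup\{x,y\}$, so the cycle has length $l+2$, a contradiction. Two degenerate cases need a separate check. If $j=i+1$, then $x$ is adjacent to two consecutive vertices, which part (1) already excludes. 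If some vertex is shared, for instance $v_{i+1}=v_j$, the same part (1) argument handles it. The statement for $v_{i-1},v_{j-1}$ follows from the same construction after reversing the orientation of $C$.

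\textbf{Part (3).} Suppose $x\sim v_i,v_j$ and $y\sim v_{i+1},v_{j+2}$. The direct analogue of the part (2) cycle is
\[
v_i\, x\, v_j\, v_{j-1}\cdots v_{i+1}\, y\, v_{j+2}\cdots v_i .
\]
It omits only $v_{j+1}$ and includes $x$ and $y$, so its length is $l+1$, again a contradiction. The reflected statement (with $v_{i-1},v_{j-2}$) follows by symmetry, reversing the orientation of $C$.

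\textbf{Expected obstacle.} The main difficulty is bookkeeping in degenerate index configurations rather than any single hard step. These include cases where $j+2$ wraps around to $i$ or $i+1$, or where arcs become empty, so that the written cycle is not simple. I will handle them one at a time. Coincidences such as $j+2=i$ reduce to part (1), applied to $y$ being adjacent to the consecutive vertices $v_i,v_{i+1}$. Other coincidences produce short cycles that are still longer than $C$, or collapse to the configuration already excluded in part (2).
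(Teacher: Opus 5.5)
Your proof is correct and essentially the same as the paper's. The rerouted cycle $v_i\,x\,v_j v_{j-1}\cdots v_{i+1}\,y\,v_{j+1}\cdots v_i$ of length $l+2$ is the paper's cycle for (2) traversed in reverse. Your cycle for (3), which omits only $v_{j+1}$ and so has length $l+1$, is the ``similar manner'' the paper leaves implicit. The one thing to fix is your opening remark that (3) admits no directly longer cycle and needs a same-length-cycle device: your own Part (3) construction refutes it, so that remark should be deleted.
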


  \begin{proof}
  Let $C=v_{1}v_2\ldots v_lv_1$ be the largest cycle in $G$ and $X$ be the set of remaining vertices. Suppose $x$ is adjacent to two consecutive vertices $v_i,v_{i+1}$ of $C$. Then $xv_{i+1}v_{i+2}...v_{i-1}v_ix$ is a cycle longer than $C$. Similarly, $y$ cannot be adjacent to two consecutive vertices of $C$. This proves (1). To prove (2), we let $x$ to be adjacent to $v_{i}$ and $v_{j}$. If $y$ is adjacent to both $v_{i+1}$ and $v_{j+1}$ then $xv_iv_{i-1}...v_{j+2}v_{j+1}yv_{i+1}v_{i+1}...v_{j-1}v_jx$ is a cycle longer than $C$ which is a contradiction. If $y$ is adjacent to $v_{i-1}$ and $v_{j-1}$ then $xv_iv_{i+1}...v_{j-2}v_{j-1}yv_{i-1}v_{i-2}...v_{j+1}v_jx$ is a cycle longer than $C$ in $G$ which is again a contradiction. The proof of (3)  follows in a similar manner. \end{proof}
In order to prove our main results, we need to prove the following.
\begin{theorem}\label{Hamiltonian lemma}
    Let $G$ be a $2$-connected graph on $m+1$ vertices such that $ G\nsupseteq C_{m}$, and
    \begin{equation*}
        |(N_{G}(u)\cup N_{G}(v))\setminus\{u,v\}|\geq \frac{m}{2}
    \end{equation*}
    for all $u,v\in V(G)$, then $G$ is Hamiltonian. 
\end{theorem}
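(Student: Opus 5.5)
The plan is to take a longest cycle $C=v_1v_2\ldots v_lv_1$ in $G$. I would first show that $l$ is close to $m$, and then rule out the remaining short lengths, so that $l=m+1$. Throughout, let $k=\lceil m/2\rceil$. The hypothesis says that $|(N_G(u)\cup N_G(v))\setminus\{u,v\}|\ge k$ for all $u,v$.

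\textbf{Step 1: a first lower bound on $c(G)$.} For non-adjacent $u,v$ we have $|N_G(u)|+|N_G(v)|\ge |(N_G(u)\cup N_G(v))\setminus\{u,v\}|\ge k$. Since $G$ is $2$-connected, \Cref{Dirac} gives $c(G)\ge k$. To apply \Cref{lem:cycle_lemma} we need $c(G)\ge (m+1)-k$, and this may exceed $k$ by one. So I would handle the boundary case $c(G)=k$ directly, expecting it to be the main technical nuisance of this step. In that case the set $X$ of vertices outside $C$ has about $m/2+1\ge 2$ vertices. For $x,y\in X$, the union $N(x)\cup N(y)$ has at least $k$ vertices. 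Vertices of $X$ have no two consecutive neighbours on $C$ (\Cref{observations}(1)), and at most $\lfloor k/2\rfloor$ neighbours on $C$ each. So most of this union must lie in $X$, and a standard path-extension argument using $2$-connectivity should produce a longer cycle. Once $c(G)\ge m+1-k$ is established, \Cref{lem:cycle_lemma} yields $c(G)\ge \min\{2k-2,m\}\ge m-2$.

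\textbf{Step 2: reduction to two cases.} Since $G\nsupseteq C_m$, we get $l\in\{m-2,m-1,m+1\}$, and it remains to exclude $l=m-1$ and $l=m-2$. In both cases the argument aims at a cycle longer than $C$, which contradicts maximality. For $l=m-1$ we have $X=\{x,y\}$. The union condition forces $|(N_C(x)\cup N_C(y))|\ge k\ge (m-1)/2+\tfrac12$, or $k-1$ if $xy\in E(G)$.

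\begin{enumerate}
\item If $xy\in E(G)$, then an $x$-neighbour $v_i$ together with a $y$-neighbour $v_{i+1}$ would let us insert the path $xy$ into $C$. Hence $N_C(x)^+\cap N_C(y)=\emptyset$.
\item Combining this with \Cref{observations}(1),(2),(3) and a pigeonhole count of the successor and predecessor sets $N_C(x)^{\pm}$ and $N_C(y)^{\pm}$ on the $m-1$ positions of $C$, I expect to show that these sets cannot all fit. This gives a contradiction.
\item If $xy\notin E(G)$, the sets are larger, and the same counting together with \Cref{observations} should give a contradiction even more easily.
\end{enumerate}

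\textbf{Step 3: the case $l=m-2$, the main obstacle.} Here $X=\{x,y,z\}$ and the slack is smaller, since $k$ vertices must be covered on a cycle of length $m-2$. I would apply the Step 2 counting to each of the three pairs. I would also exploit edges inside $X$: a path $xyz$ or $xy$ inside $X$ gives insertions of length $3$ or $2$. The aim is to derive, for some pair, two neighbours at circular distance $1$ or $2$ in the forbidden patterns of \Cref{observations}. The delicate part is the sub-case where $G[X]$ is independent and each of $x,y,z$ has roughly $m/4$ neighbours on $C$, spaced alternately. In that sub-case I would first try to show that two of the vertices of $X$ have identical alternating neighbourhoods. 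Then swapping one of them into $C$ and taking the union condition for a pair involving a displaced cycle vertex should violate the bound $k$.

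Once both $l=m-1$ and $l=m-2$ are excluded, $c(G)=m+1$, and $G$ is Hamiltonian.
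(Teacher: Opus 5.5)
Your outline has the same skeleton as the paper: Dirac's bound, the boundary case $l=\lceil m/2\rceil$, \Cref{lem:cycle_lemma} to reach $l\ge m-2$, then ruling out $l\in\{m-1,m-2\}$ by placing $A=N_C(x)$ and $B=N_C(y)$ on $C$ via \Cref{observations}. The genuine gap is in the step meant to exclude $l=m-1$. For $xy\notin E(G)$ you expect position counting to give a contradiction ``even more easily'', but it gives none.

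Parts (1) and (2) of \Cref{observations} allow at most two zero gaps between $A$ and $B$ around $C$. So they yield only $l\ge 2|A\cup B|-2$, and (3) does not improve this. That bound is compatible with $l=m-1$ and $|A\cup B|\ge\lceil m/2\rceil$. Concretely, let $m=8$ and let $G$ be $C=v_1v_2\cdots v_7v_1$ together with $x\sim v_5,v_7$ and $y\sim v_1,v_3$. Then $C$ really is a longest cycle, and $|(N(x)\cup N(y))\setminus\{x,y\}|=4=m/2$. More generally, an alternating run of $x$-neighbours immediately followed by an alternating run of $y$-neighbours always fits. So no argument that uses the hypothesis only for the pair $\{x,y\}$, plus how $x,y$ attach to a longest cycle, can close this case.

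Your difficulty ranking is also reversed. When $xy\in E(G)$, the distance-$3$ separation gives $l\ge 2|A\cup B|+2\ge m+2$ at once; this is how the paper shows $G[X]$ is edgeless. The independent case is the hard one.

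The missing idea is to apply the hypothesis to a pair made of a cycle vertex and an outside vertex, after ruling out chords at that cycle vertex. The paper labels $C$ as $1,\dots,m-1$ with $m-1\in A$ and $1\in B$. In Case 1 it takes the vertex $2$ between the $y$-neighbours $1$ and $3$. Rerouting through $x$ and $y$ shows that a chord from $2$ to the arc $[s+1,m-1]$ (where $s=\min A$), or to some $b+1$ with $b\in B\setminus\{1\}$, closes a cycle of length $m$ or $m+1$. Since $y$ also misses these vertices, $|(N(2)\cup N(y))\setminus\{2,y\}|<m/2$. In the example above, $N(v_2)=N(y)=\{v_1,v_3\}$. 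You invoke this kind of argument only in Step 3, but it is unavoidable already for $l=m-1$.

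Your Step 3 idea can in fact be completed, and more simply than you suggest. Take $l=m-2$ with $G[X]$ independent. For odd $m$, the bound $l\ge 2|A\cup B|-2$ is already a contradiction. For even $m$ it is tight for every pair, and \Cref{observations} then forces each pair to be exactly ``run plus complementary run''. Two-connectivity gives $|N_C(y)|,|N_C(z)|\ge 2$, which makes the complementary run of $N_C(x)$ unique, so $N_C(y)=N_C(z)$. The pair $\{y,z\}$ then violates the hypothesis directly, with no swapping needed.

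Finally, the boundary case $l=\lceil m/2\rceil$ in Step 1 is only asserted. The paper handles it with a longest path in $G[X]$ whose two ends both have neighbours on $C$, followed by a stretch count.
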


\begin{proof}
 Suppose $G$ is not Hamiltonian. Consider a longest cycle $C$ of length $l$ in $G$. Since $G$ is $2$-connected therefore by \Cref{Dirac}, we conclude that $l\geq m/2$. Now if $l\geq m/2+1$ then by \Cref{lem:cycle_lemma}, we have
\begin{equation}\label{eqn:lemmafirsteqon_l}
    l\geq m-2.
\end{equation}
Now, we claim the following. 
\begin{claim}
    $l\neq \lceil m/2\rceil$.
\end{claim}
\begin{proof}[Proof of the claim]
    Suppose, on the contrary, $l=\lceil{m}/{2}\rceil$. Consider $X=V(G)\setminus V(C)$. Then $|X|=\lfloor{m}/{2}\rfloor+1$. We first show that $G[X]$ cannot be a null graph. If not, then let $x,y$ be two different vertices in $X$. By the condition, we have 
    $$ |(N_{G}(x)\cup N_{G}(y))\cap C|\geq \frac{m}{2}.$$ 
    By the pigeonhole principle and without loss of generality, we can assume that $|N_G(x)\cap C|\geq \lceil{m}/{4}\rceil$. This implies that $x$ has at least two consecutive neighbors on $C$, contradicting that $C$ is the longest cycle. It is important to note that if $l\geq m/2+1$, then this contradiction may not be possible. However, in that case, we can apply \Cref{lem:cycle_lemma} to conclude that $c(G)\geq m-2$.

    Now, we show that there exists a longest path $P$ in $G[X]$ with both endpoints having a neighbor in $C$. Note that, for any longest path $P=P_{s}=x_{1}x_{2}\ldots x_{s}$ in $G[X]$, at least one endpoint must have a neighbor in $C$ as 
    \begin{equation*}
        |(N_{G}(u)\cup N_{G}(v))\setminus\{u,v\}|\geq \frac{m}{2}
    \end{equation*}
    for all $u,v\in V(G)$ and $|X\setminus \{x_{1},x_{s}\}|=\lfloor{m}/{2}\rfloor-1$. Hence, we assume that $P_s=x_1 x_2\ldots x_s$ is a longest path such that $x_1$ has a neighbor in $V(C)$. Assume $x_s$ does not have a neighbor in $V(C)$. Since $G$ is $2$-connected, $x_s$ has at least  one more neighbor other than $x_{s-1}$ on $P_s$, say $x_{s'}$. Note that $x_{s'+1}$ must have a neighbor in $V(C)$ otherwise 
\begin{equation*}
    |(N_{G}(x_{s})\cup N_{G}(x_{s'+1}))\setminus\{x_{s},x_{s'+1}\}|\leq|X|-2\leq  \bigg\lfloor\frac{m}{2}\bigg\rfloor-1
\end{equation*}
which is a contradiction. Thus, we get a longest path $x_1\ldots x_{s'}\, x_s\, x_{s-1}\ldots x_{s'+1}$ with both endpoints having a neighbor in $C$.

Now assume $P_s=x_1 x_2\ldots x_s$ is a longest path in ${G}[X]$ such that $x_1$ and $x_s$ each have at least one neighbor in $V(C)$. By our condition on $G$, we have
\begin{equation}
    |\{N_{G}(x_1) \cup N_{G}(x_s)\}\cap V(C)|\geq \frac{m}{2}-s+2.
\end{equation}
We can choose two non-empty sets $A\subseteq N_{G}(x_1) \cap V(C)$ and $B\subseteq N_{G}(x_s) \cap V(C)$ such that $A\cap B=\phi$ and $|A|+|B|\geq {m}/{2}-s+2$. 
 Consider the disjoint stretches $A_1, A_2 \dots, A_q$ of $V(C)$ such that between any two consecutive stretches, there must exist at least one member of $B$ and for all $j\in [q]$
\begin{enumerate}
    \item $A_j=[t_j,h_j]$ where $t_j,h_j\in A$,
    \item  $B\cap  A_j=\phi$, and
    \item $A\subseteq  \sqcup_{i}A_i$.
\end{enumerate}
 Similarly, consider stretches $B_j=[t'_j,h'_j]$ for $j\in [q]$, where $t'_j,h'_j\in B$ and $B\subseteq \sqcup_iB_i$. Let $a_j=|A_j|$ and $b_j=|B_j|$ for $j\in[q]$. Since $P_s$ is a path between $x_1$ and $x_s$, the circular distance between any $a\in A$ and $b\in B$ is at least $s+1$. In particular, the number of vertices on the arc $[t_{j+1},h'_{j-1}]$ is at least $s$. From (1) of \Cref{observations}, it follows that no two vertices of $A$ or $B$ can be consecutive on $C$ as $C$ is the longest cycle. Thus we get
\begin{align*}
    \underset{i=1}{\sum ^{q}} a_i\geq \underset{i=1}{\sum ^{q}} \left(2|A_i\cap A|-1\right)=2|A|-q\\
 \underset{i=1}{\sum ^{q}} b_i\geq \underset{i=1}{\sum ^{q}} \left(2|B_i\cap B|-1\right)=2|B|-q.\\
    \end{align*}
Since there are $q$ many disjoint parts for both $A$ and $B$, we have
\begin{eqnarray*}
    l &\geq&  \underset{i=1}{\sum ^{q}} (a_i+b_i)+ 2qs\\
    &\geq& 2(|A|+|B|)-2q+2qs\\
    &\geq& 2\left(\frac{m}{2}-s+2\right)+2q(s-1)\\
&\geq& m+2 \text{  }\;\;\;\; (\text{As }q\geq 1 \text{ and } s\geq 1)
\end{eqnarray*}
which is absurd. This establishes our claim.
\end{proof}

Thus from \Cref{eqn:lemmafirsteqon_l}, $l\in \{m-2,m-1\}$ as ${G}$ does not contain a $C_{m}$ or $C_{m+1}$. Let $V(G)\setminus V(C)=X$.
 \begin{claim}
        $G[X]$ is a null graph.
    \end{claim}
\begin{proof}[Proof of the claim]
Note that the size of $X$ is either two or three. First, we assume that $X=\{x,y\}$ and if possible, let $x$ be adjacent to $y$. Then for each $a\in N_G(x)\cap C$ and $b\in N_G(y)\cap C$, the circular distance between $a$ and $b$ on $C$ is at least $3$; otherwise, we can construct a larger cycle through $x,y$, which is a contradiction. Also note that each $a\in  N_G(x)\cap C$ and $b\in  N_G(y)\cap C$ cannot be consecutive on the cycle. Since $|N_G(x)\cup N_G(y)|\geq m/2$, to place all the vertices of this set on the cycle $C$ in a suitable way, we need a stretch of at least $m$ vertices, which is not possible as $l=m-1$.

Therefore, we let $X=\{x,y,z\}$ and $x$ is adjacent to $y$. If neither $x$ nor $y$ is adjacent to $z$, then $|N_G(x)\cup N_G(y)|\geq m/2$ and we have a similar contradiction as the previous case. If at least one of $x$ or $y$ (say $y$) is adjacent to $z$. Then consider the pair $\{x,z\}$. Here we have $|N_G(x)\cup N_G(z)|\geq m/2-1$. Since $xyz$ is a path on three vertices and $C$ is the longest cycle, for each $a\in N_G(x)\cap C$ and $b\in N_G(z)\cap C$, the circular distance between $a$ and $b$ on $C$ is at least $4$. Thus, we need a stretch of at least 
\begin{equation*}
    (2|N_G(x)\cap C|-1)+(2|N_G(z)\cap C|-1)+3\geq 2|N_G(x)\cup N_G(z)|+1\geq m-1
\end{equation*}
vertices on $C$ to adjust $N_G(x)\cup N_G(z)$. This is a contradiction as $l=m-2$.
\end{proof}

Now it is enough to show that $l=m-1$ many vertices are not sufficient to place all the neighbors of $x$ and $y$. If $l=m-2$, then we choose any two $x,y\in X$ and obtain a similar contradiction. So, without loss of generality, let $l=m-1$. Since $x$ and $y$ satisfy
    \begin{equation}
        |(N_{G}(x)\cup N_{G}(y))\cap C|\geq \frac{m}{2},
    \end{equation}
    we can construct $A=N_G(x) \cap C$, $B=N_G(y) \cap C$ such that $A\cap B= \phi$ and
    \begin{align*}
        |A|=\frac{m}{4}+k\\
        |B|\geq \frac{m}{4}-k
    \end{align*}
    for some $k\geq 0$. Note that $|B|$ has to be non-zero, otherwise $|A|\geq m/2$, which implies that there exist at least two consecutive neighbors of $x$ on $C$ forming a longer cycle.

    We prove that there exist $a\in A$ and $b\in B$ such that they are consecutive on $C$. If not, then for each $a\in A$ and $b\in B$, the circular distance between $a$ and $b$ is at least $2$. Also, no two elements in $A$ or $B$ can be consecutive as $C$ is a longest cycle. Thus, we need a stretch of at least
    \begin{equation*}
        \left[2\left(\frac{m}{4}+k\right)-1\right]+\left[2\left(\frac{m}{4}-k\right)-1\right]+2=m
    \end{equation*}
     vertices to accommodate all the members of $A\sqcup B$, which is not possible.

Now, without loss of generality, we can index the vertices of $C$ as $\{1,2,\ldots,m-1\}$ such that $(m-1)\in A$ and $1\in B$. We divide the rest of the proof into two cases.

\textbf{Case 1.} There exists no $j\in A$ and $i,k\in B$ such that $i<j<k$.

Let $s=\min\{a:a\in A\}$ and $t=\max\{b:b\in B\}$. Then clearly, we have $s-t\leq 2$; otherwise, we need a stretch of at least $m$ vertices on $C$ to adjust the members of $A\sqcup B$. If $s-t=2$, then 
\begin{align*}
    &A=\{s,s+2,\ldots,m-3, m-1\}\\
    &B=\{1,3,5,\ldots,t\}
\end{align*}
It is important to note that this is the only possible case here. If for any two consecutive vertices $i,j\in A$ or $i,j\in B$, $|i-j|\geq 3$, then the number of vertices on the required stretch would be at least
\begin{equation*}
    \left[2\left(\frac{m}{4}+k\right)-1\right]+\left[2\left(\frac{m}{4}-k\right)-1\right]+1+1=m.    
\end{equation*}
See \Cref{fig:twocycles} for a better understanding of the scenario. Here, we observe the following: 
\begin{align*}
    &2\nsim j \textnormal{ for all } j\in [s+1,m-1]\\
    &2\nsim i \textnormal{ for all } i\in \{b+1:b\in B\setminus \{1\}\}.
\end{align*}
If $2\sim (s+j)\in [s+1,m-1]$ for some odd $j$, then $2(s+j)(s+j+1)\ldots (m-1)x(s+j-1)(s+j-2)\ldots 3y12$ forms a cycle of length $m+1$. Further, if $2\sim (s+j)\in [s+1,m-1]$ for some even $j$, then $2(s+j)(s+j+1)\ldots (m-1)x(s+j-2)\ldots 3y12$ forms a cycle of length $m$. If $2\sim i=(b+1)$ for some $b\in B$, then $2(b+1)(b+2)\ldots 1y(b-1)(b-2)\ldots 2$ forms a cycle of length $m$. Clearly, $y\nsim (b+1)$ for all $b\in B$. Also, by (2) and (3) of \Cref{lem:cycle_lemma} $y\nsim j$ for all $[s+1,m-1]$.  Since $|A|=m/4+k$, $|B|\geq m/4-k$ and no two elements of $A$ are consecutive on $C$,
\begin{equation*}
    |[s+1,m-1]|+|\{b+1:b\in B\setminus\{1\}\}|\geq \frac{m}{2}+2k-1+\frac{m}{4}-k-1\geq \frac{3m}{4}+k-2.
\end{equation*}
Consequently,
\begin{equation*}
        |(N_{G}(2)\cup N_{G}(y))\setminus\{2,y\}|\leq \frac{m}{4}-k+1<\frac{m}{2},
\end{equation*}
which is a contradiction.

\begin{figure}[H]
    \centering
    \begin{tikzpicture}[x=0.85cm,y=0.85cm]
	\begin{pgfonlayer}{nodelayer}
		\node [style=none] (0) at (0, 3) {};
		\node [style=none] (1) at (3, 0) {};
		\node [style=none] (2) at (0, -3) {};
		\node [style=none] (3) at (-3, 0) {};
		\node [style=Doty,scale=0.75] (4) at (3, -0.5) {};
		\node [style=Doty,scale=0.75] (5) at (2.75, -1.25) {};
		\node [style=Doty,scale=0.75] (6) at (2.25, -2) {};
		\node [style=Doty,scale=0.75] (7) at (1.75, -2.5) {};
		\node [style=Doty,scale=0.75] (8) at (1, -2.75) {};
		\node [style=Doty,scale=0.75] (11) at (-2.75, -1.25) {};
		\node [style=Doty,scale=0.75] (12) at (-3, 0.25) {};
		\node [style=Doty,scale=0.75] (13) at (-2.82, 1) {};
		\node [style=Doty,scale=0.75] (14) at (-2.5, 1.75) {};
		\node [style=Doty,scale=0.75] (15) at (-2, 2.25) {};
		\node [style=Doty,scale=0.75] (16) at (-1.25, 2.75) {};
		\node [style=Doty,scale=0.75] (18) at (2.25, 2) {};
		\node [style=Doty,scale=0.75] (19) at (2.75, 1.25) {};
		\node [style=Doty,scale=0.75] (20) at (3, 0.5) {};
		\node [style=Doty,scale=0.75] (21) at (0, 0.5) {};
		\node [style=Doty,scale=0.75] (22) at (0, -0.5) {};
		\node [style=Doty,scale=0.75] (23) at (-2.25, -2) {};
		\node [style=Doty,scale=0.75] (24) at (-1.75, -2.5) {};
		\node [style=none] (25) at (3.35, -0.5) {\scriptsize$1$};
		\node [style=none] (26) at (3.1, -1.15) {\scriptsize$2$};
		\node [style=none] (27) at (2.6, -2.0) {\scriptsize$3$};
		\node [style=none] (28) at (2.1, -2.6) {\scriptsize$4$};
		\node [style=none] (29) at (1.25, -3.05) {\scriptsize$5$};
		\node [style=none] (30) at (-3.05, -1.35) {\scriptsize$t$};
		\node [style=none] (31) at (-2.79, -2.15) {\scriptsize$t-1$};
		\node [style=none] (32) at (-2.3, -2.65) {\scriptsize$t-2$};
		\node [style=none] (33) at (-3.95, 0.25) {\scriptsize$s=t+2$};
		\node [style=none] (34) at (-3.4, 1.05) {\scriptsize$s+1$};
		\node [style=none] (35) at (-3.1, 1.8) {\scriptsize$s+2$};
		\node [style=none] (36) at (-2.5, 2.5) {\scriptsize$s+3$};
		\node [style=none] (37) at (-1.25, 3.15) {\scriptsize$s+4$};
		\node [style=none] (38) at (3.75, 0.5) {\scriptsize$m-1$};
		\node [style=none] (39) at (3.5, 1.5) {\scriptsize$m-2$};
		\node [style=none] (40) at (3, 2.25) {\scriptsize$m-3$};
		\node [style=none] (41) at (0.25, 0.25) {$x$};
		\node [style=none] (42) at (-0.5, -0.25) {$y$};
		\node [style=none] (43) at (9.5, 3) {};
		\node [style=none] (44) at (12.5, 0) {};
		\node [style=none] (45) at (9.5, -3) {};
		\node [style=none] (46) at (6.5, 0) {};
		\node [style=Doty,scale=0.75] (47) at (12.5, -0.5) {};
		\node [style=Doty,scale=0.75] (48) at (12.25, -1.25) {};
		\node [style=Doty,scale=0.75] (49) at (11.85, -1.85) {};
		\node [style=Doty,scale=0.75] (50) at (11.35, -2.35) {};
		\node [style=Doty,scale=0.75] (51) at (10.75, -2.75) {};
		\node [style=Doty,scale=0.75] (52) at (6.9, -1.5) {};
		\node [style=Doty,scale=0.75] (53) at (6.6, -0.75) {};
		\node [style=Doty,scale=0.75] (54) at (6.5, 0) {};
		\node [style=Doty,scale=0.75] (55) at (6.6, 0.75) {};
		\node [style=Doty,scale=0.75] (57) at (7.75, 2.44) {};
		\node [style=Doty,scale=0.75] (58) at (11.75, 2) {};
		\node [style=Doty,scale=0.75] (59) at (12.25, 1.25) {};
		\node [style=Doty,scale=0.75] (60) at (12.5, 0.5) {};
		\node [style=Doty,scale=0.75] (61) at (9.5, 0.5) {};
		\node [style=Doty,scale=0.75] (62) at (9.5, -0.5) {};
		\node [style=Doty,scale=0.75] (63) at (7.25, -2) {};
		\node [style=Doty,scale=0.75] (64) at (7.75, -2.5) {};
		\node [style=none] (65) at (12.9, -0.5) {\scriptsize$1$};
		\node [style=none] (66) at (12.65, -1.25) {\scriptsize$2$};
		\node [style=none] (67) at (12.2, -2) {\scriptsize$3$};
		\node [style=none] (68) at (11.5, -2.7) {\scriptsize$4$};
		\node [style=none] (69) at (10.75, -3.2) {\scriptsize$5$};
		\node [style=none] (70) at (6.55, -1.5) {\scriptsize$t$};
		\node [style=none] (71) at (6.75, -2.25) {\scriptsize $t-1$};
		\node [style=none] (72) at (7.1, -2.75) {\scriptsize $t-2$};
		\node [style=none] (73) at (5.6, -0.85) {\scriptsize$s=t+1$};
		\node [style=none] (74) at (5.85, 0) {\scriptsize $s+1$};
		\node [style=none] (75) at (5.95, 0.8) {\scriptsize$s+2$};
		\node [style=none] (76) at (7.25, 2.75) {\scriptsize$s+2i$};
		\node [style=none] (78) at (13.18, 0.5) {\scriptsize$m-1$};
		\node [style=none] (79) at (12.95, 1.3) {\scriptsize$m-2$};
		\node [style=none] (80) at (12.45, 2.1) {\scriptsize$m-3$};
		\node [style=none] (81) at (9.75, 0.25) {$x$};
		\node [style=none] (82) at (9.2, -0.25) {$y$};
		\node [style=none] (83) at (-3.6, -0.55) {\scriptsize$t+1$};
		\node [style=Doty,scale=0.75] (84) at (-3, -0.5) {};
		\node [style=Doty,scale=0.75] (85) at (8.5, 2.85) {};
		\node [style=Doty,scale=0.75] (86) at (10.15, 2.9) {};
		\node [style=none] (87) at (10.65, 3.25) {\scriptsize$s+2i+3$};
		\node [style=Doty,scale=0.75] (88) at (9.25,3) {};
		\node [style=none] (90) at (0, -4.25) {Case $s-t=2$};
		\node [style=none] (91) at (9.75, -4.25) {Case $s-t=1$};
	\end{pgfonlayer}
	\begin{pgfonlayer}{edgelayer}
		\draw [bend left=45] (0.center) to (1.center);
		\draw [bend left=45] (1.center) to (2.center);
		\draw [bend left=45] (2.center) to (3.center);
		\draw [bend left=45] (3.center) to (0.center);
		\draw [line width=1pt](21) to (20);
		\draw [line width=1pt](21) to (18);
		\draw [line width=1pt](21) to (16);
		\draw [line width=1pt](21) to (14);
		\draw [line width=1pt](21) to (12);
		\draw [line width=1pt](22) to (11);
		\draw [line width=1pt](22) to (4);
		\draw [line width=1pt](22) to (6);
		\draw [line width=1pt](22) to (8);
		\draw [line width=1pt](22) to (24);
		\draw [bend left=45] (43.center) to (44.center);
		\draw [bend left=45] (44.center) to (45.center);
		\draw [bend left=45] (45.center) to (46.center);
		\draw [bend left=45] (46.center) to (43.center);
		\draw [line width=1pt](61) to (60);
		\draw [line width=1pt](61) to (58);
		\draw [line width=1pt](61) to (57);
		\draw [line width=1pt](61) to (55);
		\draw [line width=1pt](61) to (53);
		\draw [line width=1pt](62) to (52);
		\draw [line width=1pt](62) to (47);
		\draw [line width=1pt](62) to (49);
		\draw [line width=1pt](62) to (51);
		\draw [line width=1pt](62) to (64);
		\draw [line width=1pt](86) to (61);
	\end{pgfonlayer}
\end{tikzpicture}
    \caption{Distribution of neighbors of $x$ and $y$ on $C$}
    \label{fig:twocycles}
\end{figure}

 So we let $s-t=1$. If $A=\{s,s+2,\ldots, m-1\}$, then we proceed similarly to obtain a contradiction. Otherwise let 
 \begin{equation*}
     A=\{s,s+2,\ldots, s+2i\}\cup \{s+2i+3,s+2i+5,\ldots, m-1\}
 \end{equation*}
 for some fixed $i\geq 0$. No other case of $A$ is possible here (see \Cref{fig:twocycles}). Otherwise, we need at least $m$ vertices on $C$ for $A\sqcup B$. In this case, we have $2\nsim j$ for all $j\in [s,m-1]\setminus \{s+2i+3\}$. If not, then we can construct a similar cycle as the previous of length $(m+1)$ for $j\geq (s+1)$. For $j=s$, we get $2s(s+1)\ldots (m-1)1yt(t-1)\ldots 2$, a cycle of length $m$. Therefore, arguing similarly, we conclude that 
 \begin{equation*}
        |(N_{G}(2)\cup N_{G}(y))\setminus\{2,y\}|<\frac{m}{2},
\end{equation*}
a contradiction. 

\textbf{Case 2.} There exist $j\in A$ and $i,k\in B$ such that $i<j<k$.
For $q\geq 2$ and $j\in [q]$, let $B_j=[t_j, h_j]$ be stretches of $V(C)$ such that between any two consecutive stretches, there must exist at least one member of $A$ and
\begin{enumerate}
    \item $t_{1}=1$ and $t_{j}, h_{j}\in B$ for $j\in[q]$, 
    \item $A\cap (\sqcup_iB_i)=\phi$,
    \item $B\subseteq \sqcup_iB_i$.
\end{enumerate}

 That is, for any $h_{j}$ and $t_{j+1}$, there exists some $a\in A$ such that $h_{j}<a<t_{j+1}$. It is important to note that $B_{j} $ can be singleton also. In that case $t_{j}=h_{j}$. See a pictorial representation of these stretches in \Cref{fig:cycle_strech}.

\begin{figure}[H]
    \centering
    \begin{tikzpicture}[x=0.9cm,y=0.9cm]
	\begin{pgfonlayer}{nodelayer}
		\node [style=Doty,scale=0.75] (0) at (0, 3) {};
		\node [style=Doty,scale=0.75] (2) at (0, -3) {};
		\node [style=Doty,scale=0.75] (3) at (-3, 0) {};
		\node [style=Doty,scale=0.75] (4) at (2.25, 2) {};
		\node [style=Doty,scale=0.75] (5) at (2.75, 1.25) {};
		\node [style=Doty,scale=0.75] (6) at (3, -0.5) {};
		\node [style=Doty,scale=0.75] (7) at (2, -2.25) {};
		\node [style=Doty,scale=0.75] (8) at (-2.25, -2) {};
		\node [style=Doty,scale=0.75] (9) at (-2, 2.25) {};
		\node [style=Doty,scale=0.75] (10) at (0, 0.5) {};
		\node [style=Doty,scale=0.75] (11) at (0, -0.5) {};
		\node [style=none] (12) at (-0.5, 0.5) {$x$};
		\node [style=none] (13) at (-0.5, -0.5) {$y$};
		\node [style=none] (14) at (3.15, 2.25) {$m-1$};
		\node [style=none] (15) at (3.65, 1.4) {$1=t_1$};
		\node [style=none] (16) at (3.5, -0.5) {$h_1$};
		\node [style=none] (17) at (2.45, -2.5) {$t_2$};
		\node [style=none] (18) at (0, -3.5) {$h_2$};
		\node [style=none] (19) at (-2.75, -2.25) {$t_3$};
		\node [style=none] (20) at (-3.5, 0) {$h_3$};
		\node [style=none] (21) at (-2.3, 2.65) {$t_q$};
		\node [style=none] (22) at (0, 3.45) {$h_q$};
		\node [style=none] (23) at (3.5, 0.5) {$B_1$};
		\node [style=none] (24) at (1.25, -3.25) {$B_2$};
		\node [style=none] (25) at (-3.25, -1.25) {$B_3$};
		\node [style=none] (26) at (-1.25, 3.25) {$B_q$};
	\end{pgfonlayer}
	\begin{pgfonlayer}{edgelayer}
		\draw [bend left=10,line width=1pt] (4) to (5);
		\draw [bend right=20,line width=1pt,dashed] (4) to (0);
         \draw [bend right=15,line width=1pt] (0) to (9);
		\draw [bend right=15,line width=8pt,opacity=0.2] (0) to (9);     
		\draw [bend right=20,line width=1pt,dashed] (9) to (3);
		\draw [bend right=15,line width=1pt] (3) to (8);
        \draw [bend right=15,line width=8pt,opacity=0.2] (3) to (8);
		\draw [bend right=20,line width=1pt,dashed] (8) to (2);
		\draw [bend right=15,line width=1pt] (2) to (7);
        \draw [bend right=15,line width=8pt,opacity=0.2] (2) to (7);
		\draw [bend right=15,line width=1pt,dashed] (7) to (6);
		\draw [bend right=15,line width=1pt] (6) to (5);
        \draw [bend right=15,line width=8pt,opacity=0.2] (6) to (5);
		\draw [line width=1pt](4) to (10);
		\draw [line width=1pt](5) to (11);
	\end{pgfonlayer}
\end{tikzpicture}

    \caption{Representation of stretches containing the vertices of $B$}
    \label{fig:cycle_strech}
\end{figure}

From (2) and (3) of \Cref{observations} it follows that for each $j\geq 2$, 
\begin{equation}\label{nonadjacency of x_1}
    (t_{j}-1)\nsim x, \text{ and } (t_{j}-2)\nsim x.
\end{equation}
We further divide this case into two parts.

Let $y\sim (m-2)$. Again, (2) and (3) of \Cref{observations} imply that for each $j\geq 1$, 
\begin{equation}\label{nonadjacency of x_2}
    h_{j}+1\nsim x \text{ and } h_{j}+2\nsim x
\end{equation}
Also note that $y\nsim (m-3)$ as any two members of $B$ cannot be consecutive. Consider the strip 
\begin{equation*}
    S=\{1,2,\ldots, m-4\}
\end{equation*}
Recall that $q\geq 2$. Hence, \eqref{nonadjacency of x_1} and \eqref{nonadjacency of x_2} together implies that there exists at least one $j$ such that
\begin{align}
 (t_{j}-1)\nsim x, \text{ and } (t_{j}-2)\nsim x\\
    (h_{j}+1)\nsim x \text{ and } (h_{j}+2)\nsim x.
\end{align}
Thus, we need at least $2(m/2-2)-1+2=m-3$ vertices on $S$ to adjust the vertices of $(A\sqcup B)\setminus \{m-2,m-1\}$. However, $|S|=m-4$, which is a contradiction.

So let $y\nsim (m-2)$. In this case, we first show that there exists at most one $s$ such that $x\sim (h_{s}+1)$. If not, then let there exist two such $s$ and $s'$. This contradicts (2) of \eqref{observations}. So we consider that there exists no such $j$ and let the strip be
\begin{equation*}
    S=\{1,2,\ldots, m-3\}.
\end{equation*}
\eqref{nonadjacency of x_1} ensures that there exists at least one $j$ such that $t_{j}-1$ and $t_{j}-2$ are nonneighbor of $x$. Consequently, we need at least $2(m/2-1)-1+2=m-1$ many vertices on $S$ to adjust $A\sqcup B\setminus \{m-1\}$. This is not possible as $|S|=m-3$. Thus, we consider that there exists exactly one $s$ such that $x\sim (h_{s}+1)$. It follows from (2) and (3) of \Cref{observations} that $x\nsim (h_{j}+1)$ and $x\nsim (h_{j}+2)$ for all $j\geq 1$ and $j\neq s$. Since $q\geq 2$, there exists at least one such $j$. Here also, we consider the strip 
\begin{equation*}
    S=\{1,2,\ldots, m-3\}.
\end{equation*}
We need to plot $(m/2-1)$ vertices of $(A\sqcup B)\setminus \{m-1\}$ on $S$. However, this is not possible as the number of vertices on the required stretch is at least
\begin{equation*}
    2\left(\frac{m}{2}-1\right)-1+2=m-1.
\end{equation*}
This concludes that $l$ cannot be $(m-1)$. By using similar arguments, it can be shown that $l\neq (m-2)$. This completes the proof.
\end{proof}
We are now ready to prove \Cref{main theorem_2}.
\setcounter{theorem}{\getrefnumber{main theorem_2}-1}
\begin{theorem}
    If $m\geq 2n+2$ and $m\geq 3$, then $R(K_{2,n},C_{\{m,m+1\}})=m+1$.
\end{theorem}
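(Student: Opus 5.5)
We prove the two inequalities separately. The notation $R(K_{2,n},C_{\{m,m+1\}})$ means the least $N$ such that every graph $G$ on $N$ vertices either contains $K_{2,n}$ or has $\overline{G}$ containing a $C_m$ or a $C_{m+1}$.

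\emph{Lower bound.} We show that $m$ vertices do not suffice. Take $G=\overline{K_m}$, the empty graph on $m$ vertices. It contains no $K_{2,n}$, since $n\geq 1$. Its complement $K_m$ contains no $C_m$? It does. So instead we take $G=K_1\cup \overline{K_{m-1}}$ read through the complement: $\overline{G}=K_{m-1}\cup K_1$, i.e.\ $G$ is a star $K_{1,m-1}$. This $G$ is $K_{2,n}$-free for $n\geq 2$, because any two vertices have at most one common neighbour. Meanwhile $\overline{G}$ has only $m$ vertices and its largest cycle has length $m-1$, so it contains neither $C_m$ nor $C_{m+1}$. The case $n=1$ is handled by taking $G$ with no edges on $m-1$ vertices; more simply, $R\geq m+1$ holds trivially because $C_{m+1}$ needs $m+1$ vertices, and the star example rules out $C_m$ on $m$ vertices. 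Hence $R\geq m+1$.

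\emph{Upper bound.} Let $G$ have $m+1$ vertices with $G\nsupseteq K_{2,n}$, and suppose $\overline{G}\nsupseteq C_m$. We will show that $\overline{G}\supseteq C_{m+1}$, i.e.\ that $\overline{G}$ is Hamiltonian, by verifying the hypotheses of \Cref{Hamiltonian lemma} for $\overline{G}$.

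\begin{enumerate}[(i)]
\item \emph{$2$-connectivity.} Since $m\geq 2n+2$, \Cref{2-connected lemma} gives that $\overline{G}$ is $2$-connected.
\item \emph{No $C_m$.} This holds by assumption.
\item \emph{Neighbourhood union condition.} Since $G$ is $K_{2,n}$-free, every pair $u,v$ has at most $n-1$ common neighbours in $G$. In $\overline{G}$ on $m+1$ vertices this gives, exactly as in the observation preceding \Cref{2-connected lemma},
\[
|(N_{\overline{G}}(u)\cup N_{\overline{G}}(v))\setminus\{u,v\}|\geq (m+1-2)-(n-1)=m-n.
\]
Using $n\leq (m-2)/2$, we get $m-n\geq m/2+1\geq m/2$.
\end{enumerate}

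With all three hypotheses in place, \Cref{Hamiltonian lemma} yields a Hamiltonian cycle, i.e.\ a $C_{m+1}$ in $\overline{G}$. This proves the upper bound.

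The main obstacle is the Hamiltonicity step, which rests entirely on \Cref{Hamiltonian lemma}. Given that lemma and \Cref{2-connected lemma}, the remaining work is checking the arithmetic $m-n\geq m/2$ and treating the small cases ($m\geq 3$, and $n=1$) carefully.
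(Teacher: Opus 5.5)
Your upper bound is the paper's argument: assume $\overline{G}\nsupseteq C_m$, get $2$-connectivity from \Cref{2-connected lemma}, check that $|(N_{\overline{G}}(u)\cup N_{\overline{G}}(v))\setminus\{u,v\}|\geq m-n\geq m/2$, and invoke \Cref{Hamiltonian lemma}. Your lower-bound witness $K_{1,m-1}$ is also the paper's, and it is valid for $n\geq 2$.

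The one genuine error is your treatment of $n=1$, and neither of your justifications works there. A graph on $m-1$ vertices can only certify $R>m-1$, not $R>m$. For $n=1$ the star $K_{1,m-1}$ is not a witness at all, because $K_{2,1}=P_3$ sits inside it as soon as $m-1\geq 2$.

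This cannot be patched, because the statement is false for $n=1$. A $K_{2,1}$-free graph on $m$ vertices is a matching. Its complement therefore has minimum degree at least $m-2$, which is at least $m/2$ whenever $m\geq 4$, and $m\geq 2n+2=4$ holds here. By Dirac's theorem (\Cref{Dirac_2}) that complement contains $C_m$. Hence $R(K_{2,1},C_{\{m,m+1\}})\leq m$, contradicting the claimed value $m+1$.

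The theorem has to be read with $n\geq 2$, which the paper assumes tacitly, since its lower-bound example is the same star. You should state that assumption explicitly rather than claim to handle $n=1$. You should also delete the false start with $\overline{K_m}$. Once $n\geq 2$ is assumed, the side condition $m\geq 3$ is automatic, since $m\geq 2n+2\geq 6$.
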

\begin{proof}
We fix $m$ and $n$ such that $m\geq 2n+1$. To obtain the lower bound, we consider the graph $K_{1,m-1}$, which is $K_{2,n}$-free. The complement graph $\overline{G}$ does not contain a $C_{m}$ or $C_{m+1}$. This proves that $R(K_{2,n},C_{\{m,m+1\}})>m$. 

    For the upper bound, we consider a graph $G$ on $m+1$ vertices such that $G$ is $K_{2,n}$-free. It is enough to show that $\overline{G}$ either contains a $C_{m}$ or $C_{m+1}$. So, if possible, let $\overline{G}$ does not contain a $C_{m}$. This implies that $\overline{G}$ is $2$-connected by \Cref{2-connected lemma}. Since $G$ is $K_{2,n}$-free, for all $u,v\in V(\overline{G})$, we have
    \begin{equation*}
        |(N_{\overline{G}}(u)\cup N_{\overline{G}}(v))\setminus\{u,v\}|\geq (m-1)-(n-1)\geq \frac{m}{2}.
    \end{equation*}
     Hence, using the fact that $\overline{G}$ is $2$-connected, $ \overline{G}\nsupseteq C_{m}$ and applying \Cref{Hamiltonian lemma} on $\overline{G}$, we conclude that $\overline{G}$ is Hamiltonian. This completes the proof.
\end{proof}

\begin{theorem}
    If $m\geq 3n+4$, $R(K_{2,n},C_{m})=m+1$.
\end{theorem}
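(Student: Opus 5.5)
The lower bound $R(K_{2,n},C_m)>m$ is immediate, since a graph on $m-1$ vertices has no $C_m$ in its complement. Taking $G=K_{m-1}$ is also $K_{2,n}$-free only for small $m$, so the clean choice is the one from the proof of \Cref{main theorem_2}: a graph on $m$ vertices whose complement misses $C_m$. For instance, take $G=K_{1,m-1}$ on $m$ vertices. It is $K_{2,n}$-free because two leaves share only the centre, and the centre is isolated in $\overline{G}$, so $\overline{G}\nsupseteq C_m$. Hence $R(K_{2,n},C_m)\ge m+1$.

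For the upper bound, let $G$ be a $K_{2,n}$-free graph on $m+1$ vertices and suppose $\overline{G}\nsupseteq C_m$. Since $m\ge 3n+4\ge 2n+2$, \Cref{2-connected lemma} gives that $\overline{G}$ is $2$-connected. Then \Cref{Hamiltonian lemma}, applied as in the proof of \Cref{main theorem_2}, shows that $\overline{G}$ has a Hamiltonian cycle $H=v_1v_2\cdots v_{m+1}v_1$. It remains to produce a $C_m$ from $H$, which means finding a vertex that can be skipped. Concretely, I want an index $i$ with $v_{i-1}v_{i+1}\in E(\overline{G})$, which shortcuts $v_i$ immediately. More generally, I want a pair of chords $v_iv_j$ and $v_{i+1}v_{j+2}$ (or a similar configuration) of $\overline{G}$ that yields a cycle of length exactly $m$ by the standard rerouting used in \Cref{observations}.

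The density input is as follows. Each vertex $u$ has degree in $G$ at most about $n$-ish on average. More precisely, the $K_{2,n}$-freeness gives $\sum_w\binom{d_G(w)}{2}\le (n-1)\binom{m+1}{2}$, so the average $G$-degree is at most roughly $\sqrt{(n-1)(m+1)}$. It is cleaner to use the pairwise bound instead: for every pair $u,v$, $|N_{\overline G}(u)\cup N_{\overline G}(v)\setminus\{u,v\}|\ge m-n-1$. Suppose no vertex can be shortcut, so $v_{i-1}v_{i+1}\notin E(\overline G)$ for all $i$. I will then count the neighbours of $v_{i-1}$ and $v_{i+1}$ in $\overline{G}$. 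The standard Pósa-type argument says that if $v_{i+1}\sim_{\overline G} v_j$, then $v_{i-1}\not\sim_{\overline G} v_{j\pm1}$ (with a suitable sign) and $v_{i-1}\not\sim v_{j+2}$, or else a cycle of length $m$ appears. So the neighbourhood of one vertex forbids a shifted copy of it in the neighbourhood of the other. This gives $d_{\overline G}(v_{i-1})+d_{\overline G}(v_{i+1})\lesssim m+1$ together with structural constraints. Combined with the $K_{2,n}$ condition, which makes $v_{i-1}$ and $v_{i+1}$ nonadjacent in $\overline{G}$ and hence adjacent in $G$, this yields that the union of their $\overline G$-neighbourhoods has size at least $m-n-1$, while the rerouting constraints force about $(m-n-1)/2$ vertices of $H$ to be $G$-common neighbours of suitable pairs. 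Summing over pairs $(v_{i-1},v_{i+1})$ should produce a vertex $w$ together with more than $n-1$ pairs forced into $G$, which gives a $K_{2,n}$ and a contradiction as soon as $m\ge 3n+4$.

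The main obstacle is this last step: turning ``no chord of type $v_{i-1}v_{i+1}$ and no crossing chord pair creating $C_m$'' into an explicit $K_{2,n}$ in $G$, with the constant $3$ coming out. My first attempt is this. Fix $i$ and let $N^+=\{v_{j+1}: v_j\in N_{\overline G}(v_{i+1})\}$ and $N^{++}=\{v_{j+2}:\dots\}$. Rerouting shows $v_{i-1}$ is $\overline{G}$-nonadjacent to $N^{++}$ (the cycle $v_{i-1}\,v_{j+2}\cdots v_{i+1}\,v_j\cdots$ has length $m$). Hence $N^{++}\subseteq N_G(v_{i-1})$. Also $N^{++}\subseteq N_G(v_i)$, since otherwise a symmetric rerouting through $v_i$ works. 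This gives $|N_G(v_{i-1})\cap N_G(v_i)|\ge d_{\overline G}(v_{i+1})-O(1)$. Thus every $\overline{G}$-degree is at most $n+O(1)$, and $G$-degrees are at least $m-n-O(1)$. Then any two vertices share at least $2(m-n)-m-O(1)=m-2n-O(1)\ge n$ common $G$-neighbours once $m\ge 3n+4$, with the $O(1)$ bookkeeping accounting for the $+4$. That is a $K_{2,n}$, the desired contradiction.
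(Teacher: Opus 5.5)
\textbf{There is a genuine gap in the step meant to produce the $K_{2,n}$.} Your lower bound is fine. So is the reduction to a Hamiltonian cycle $H=v_1v_2\cdots v_{m+1}$ in $\overline{G}$ via \Cref{2-connected lemma} and \Cref{Hamiltonian lemma}. The failure is in turning $\overline{G}\nsupseteq C_m$ into a $K_{2,n}$ in $G$.

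\emph{The rerouting you invoke is false.} If $v_{i+1}v_j$ and $v_{i-1}v_{j+2}$ are edges of $\overline{G}$, the endpoints are shifted in opposite directions, so the two chords are nested rather than crossing. $H$ plus these two chords contains no $C_m$ in general. For example, with $i=2$, $j=6$ the chords $v_3v_6$ and $v_1v_8$ only create cycles of lengths $4,6,8,m-5,m-1,m+1$. The same objection kills your ``symmetric rerouting through $v_i$'': with $v_2v_8$ and $v_3v_6$ you get lengths $4,5,7,m-4,m-1,m+1$.

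\emph{The correct configuration does not rescue the argument.} What genuinely gives a $C_m$ is a crossing pair $v_av_b$, $v_{a+1}v_{b+2}$, with cycle $v_av_bv_{b-1}\cdots v_{a+1}v_{b+2}v_{b+3}\cdots v_a$ skipping $v_{b+1}$. Let $N=N_{\overline G}(v_{i+1})$. The crossing pair yields only that $v_{i-1}$ misses $N^{-}=\{v_{j-1}:v_j\in N\}$ and $v_i$ misses $N^{--}=\{v_{j-2}:v_j\in N\}$ in $\overline{G}$. These are different shifts of $N$, so no single set of size $d_{\overline G}(v_{i+1})-O(1)$ lands in $N_G(v_{i-1})\cap N_G(v_i)$. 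Indeed $N^{-}\cap N^{--}$ corresponds to pairs of consecutive vertices of $H$ lying in $N$, and can be empty.

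\emph{What is left unsupported.} The degree bound $d_{\overline G}\le n+O(1)$, the common-neighbourhood bound $m-2n-O(1)$, and the final $K_{2,n}$ all have no support. You also never check that the constant $+4$ actually comes out of the ``$O(1)$ bookkeeping''. The earlier paragraph about summing over pairs $(v_{i-1},v_{i+1})$ is a heuristic and does not close the gap.

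\textbf{The paper avoids local rerouting on $H$ altogether.} It splits on $\delta(\overline{G})$:
\begin{itemize}
\item If $\delta(\overline{G})\ge (m+1)/4+250$, then $\overline{G}$ is $2$-connected and non-bipartite. \Cref{Thm:Weakpan-Brandt_nby4_250_1} gives all cycle lengths up to the circumference $m+1$, hence a $C_m$.
\item Otherwise some vertex $v$ has $G$-degree roughly $3m/4$. Applying $K_{2,n}$-freeness to the pairs $\{u,v\}$ forces $\delta(\overline{G}-v)>(m+2)/3$.
\item An independent set of $\overline{G}-v$ larger than $(m+2)/3\ge n+2$ would be a $K_{n+2}\subseteq G$, so the independence number is bounded.
\item After checking $2$-connectivity, \Cref{Thm:Nash-william} makes $\overline{G}-v$ Hamiltonian, which is a $C_m$ in $\overline{G}$.
\end{itemize}
Your argument is missing a global Hamiltonicity or pancyclicity tool of this kind, or else a far more careful accounting of the crossing-chord obstructions.
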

\begin{proof}
    The lower bound follows from Burr's inequality. Therefore, it is enough to prove the upper bound. For that, let us consider a graph on $m+1$ vertices such that $ G\nsupseteq K_{2,n}$ and $ \overline{G}\nsupseteq C_{m}$. From \Cref{main theorem_2}, it follows that $\overline{G}$ is Hamiltonian (and hence $2$-connected), that is, $\overline{G}\supseteq C_{m+1}$. It is an easy exercise to show that $\overline{G}$ is nonbipartite. We now divide the proof into two cases depending on the minimum degree of $\overline{G}$.

    Let $\delta(\overline{G})\geq (m+1)/4+250$. Since $\overline{G}$ is nonbipartite and $2$-connected, we can apply \Cref{Thm:Weakpan-Brandt_nby4_250_1} on $\overline{G}$ to conclude that $\overline{G}$ is weakly pancyclic and consequently $ \overline{G}\supseteq C_{m} $. This is a contradiction.

    So let $\delta(\overline{G})< (m+1)/4+250$. This implies that there exists a vertex $v\in V(G)$ such that
    \begin{equation*}
        |N_G(v)|>(m+1)-1-\frac{m+1}{4}-250=\frac{3(m+1)}{4}-251.
    \end{equation*}
    Since $G$ is $K_{2,n}$-free, for all $u\in V(G)$ and $u\neq v$,
    \begin{equation*}\label{eqn:min degree of G-v}
        |N_{\overline{G}}(u)\cap N_G(v)|>\left[\frac{3(m+1)}{4}-251\right]-1-(n-1)>\frac{m+2}{3}.
    \end{equation*}
    Therefore, we have $\delta(\overline{G}\setminus \{v\})> (m+2)/3$. Since $m\geq 3n+4$, we have $(m+2)/3\geq n+2$. As a consequence $\alpha(\overline{G}\setminus \{v\})\leq (m+2)/3$, otherwise we have a $K_{n+2}$ and therefore a $K_{2,n}$ in $G$. Using this fact and \Cref{eqn:min degree of G-v}, we get
    \begin{equation*}
        \delta(\overline{G}\setminus \{v\})\geq \max \bigg\{\frac{m+2}{3},\alpha(\overline{G}\setminus \{v\})\bigg\}
    \end{equation*}
    Now we divide the proof into two cases depending on the connectivity of $\overline{G}\setminus \{v\}$. Recall that $\overline{G}\setminus \{v\}$ is a graph on $m$ vertices such that $(G\setminus \{v\})\nsupseteq K_{2,n}$ and $(\overline{G}\setminus \{v\})\nsupseteq C_{m}$. Let $\kappa(\overline{G}\setminus \{v\})=0$ and $A$ be the smallest component of $\overline{G}\setminus \{v\}$. Hence,
    \begin{equation*}
        |A|\geq \delta(\overline{G}\setminus \{v\})>\frac{m+2}{3}\geq 2.
    \end{equation*}
    Also $|V(\overline{G})\setminus\{v\}|- |A|\geq m/2>n$. Thus we get a $K_{2,n}$ in $G$ by choosing any two vertices from $A$ and $n$ vertices from $V(\overline{G})\setminus(\{v\}\sqcup A)$.
    
    We obtain a similar contradiction if $\kappa(\overline{G}\setminus \{v\})=1$. As a consequence $\overline{G}\setminus \{v\}$ is $2$-connected. By using \Cref{Thm:Nash-william}, we conclude that $(\overline{G}\setminus \{v\})$ is Hamiltonian and hence $\overline{G}\supseteq C_{m}$, which is a contradiction. This completes the proof.
\end{proof}

\section{Constructions}
In this section, we construct graphs for  different sets of parameters, which gives a lower bound for $R(K_{2,n},C_{2m})$.

\setcounter{theorem}{\getrefnumber{Hamiltonian lemma}}

\begin{lemma}\label{lem:construction1}
    $K_{2,n}$ is not $C_{2m}$-good, that is,
    $$R(K_{2,n},C_{2m})>n+m+1,$$
    for any integer $n\in [m+2, \infty)\setminus\bigcup_{q=2}^{\infty}\{q(m+1)-2,q(m+1)-1,q(m+1)\}$.
\end{lemma}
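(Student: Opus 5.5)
The plan is to take $G$ complete multipartite, so that $\overline{G}$ is a disjoint union of small cliques. Set $N=n+m+1$. It suffices to build a graph $G$ on $N$ vertices with $G\nsupseteq K_{2,n}$ and $\overline{G}\nsupseteq C_{2m}$. Let $\overline{G}$ be a disjoint union of cliques $K_{a_1}\sqcup\cdots\sqcup K_{a_p}$ with $\sum_i a_i=N$. Equivalently, $G$ is the complete multipartite graph with parts of sizes $a_1,\ldots,a_p$.

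\emph{No $C_{2m}$ in $\overline{G}$.} Every cycle of $\overline{G}$ lies inside one clique. So it is enough to require $a_i\le 2m-1$ for all $i$.

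\emph{No $K_{2,n}$ in $G$.} By the criterion recalled before \Cref{2-connected lemma}, we need $|N_G(u)\cap N_G(v)|\le n-1$ for all pairs $u,v$.
\begin{itemize}
\item If $u,v$ lie in the same part $i$, then $|N_G(u)\cap N_G(v)|=N-a_i$. The condition $N-a_i\le n-1$ becomes $a_i\ge m+2$.
\item If $u,v$ lie in different parts $i\ne j$, the common neighbourhood has size $N-a_i-a_j$. This is automatically smaller than $N-a_i\le n-1$.
\end{itemize}
Hence every part size must lie in $[m+2,2m-1]$. The same computation on a general component shows why $m+2$ is forced: two vertices of a component of $\overline{G}$ with $a$ vertices have complement-neighbourhood union at most $a-2$, while we need at least $m$.

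The construction therefore reduces to a number-theoretic covering question: for which $N$ can we write $N=a_1+\cdots+a_p$ with every $a_i\in[m+2,2m-1]$? With $p$ parts the attainable sums form the interval $[p(m+2),\,p(2m-1)]$. I would take the union of these intervals over $p\ge 1$, translate back via $n=N-m-1$, and compare with the allowed set $[m+2,\infty)\setminus\bigcup_{q\ge2}\{q(m+1)-2,q(m+1)-1,q(m+1)\}$. Consecutive intervals overlap once $p$ is moderately large, since $p(2m-1)\ge(p+1)(m+2)-1$ for $p$ large relative to $m$. So only finitely many small $p$ need separate checking.

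The main obstacle is that this pure-clique family does not seem to match the stated exceptional set exactly. For small $p$ it leaves gaps, for instance around $N=2m+3$, that the lemma claims to cover. I expect the fix is to allow one component that is not a clique: a graph on up to about $2m+1$ (or more) vertices with no $C_{2m}$, in which every pair $u,v$ has $|(N(u)\cup N(v))\setminus\{u,v\}|\ge m$ inside the component. The first candidates I would try are a clique with a few edges removed, and two cliques glued along a small set. I would then redo the interval bookkeeping with this enlarged menu of component sizes and check that the only residues left uncovered are exactly $n\equiv -2,-1,0\pmod{m+1}$, with $n\ge 2(m+1)-2$. Finding that extra component, and confirming it is $C_{2m}$-free while satisfying the pair condition, is the step I expect to require real work. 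The remaining verifications are the routine computations above.
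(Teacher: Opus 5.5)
Your analysis of the pure-clique family is correct, but it does not prove the lemma, and you leave open the step that would. With $\overline{G}$ a disjoint union of cliques of sizes in $[m+2,2m-1]$ you get $n=N-m-1\in[p(m+2)-m-1,\;p(2m-1)-m-1]$.
\begin{itemize}
\item For $p=1$ this is $n\le m-2$, and for $p\ge 2$ it is $n\ge m+3$. So $n=m+2$ is missed for every $m$.
\item For small $m$ far more is missed. For $m=3$ every clique has exactly $5$ vertices, so you only get $n\equiv 1 \pmod 5$, whereas the lemma claims $n=5,9,13,17,\dots$.
\end{itemize}
Your own component count shows that no disjoint union can handle $n=m+2$: on $2m+3$ vertices two components would each need at least $m+2$ vertices, so $\overline{G}$ must be connected. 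The candidates you list also fail there. A clique on $2m+3$ vertices with a few edges deleted still contains $C_{2m}$. Two cliques sharing $s\ge 2$ vertices contain cycles of every length from $3$ up to the total number of vertices.

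The missing idea is to glue along a single vertex, and to glue all the cliques at that same vertex. Take $\overline{G}=K_1\vee(K_{a_1}\cup\cdots\cup K_{a_k})$ with $\sum_i a_i=n+m$.
\begin{itemize}
\item The blocks of $\overline{G}$ are the cliques $K_{a_i+1}$, so every cycle has length at most $\max_i a_i+1$, and $a_i\le 2m-2$ suffices.
\item In $G$ the extra vertex is isolated, so it lies in no common neighbourhood. Two vertices of part $i$ have exactly $n+m-a_i$ common neighbours, so $K_{2,n}$-freeness needs only $a_i\ge m+1$.
\end{itemize}
Lowering the threshold from $m+2$ to $m+1$ is exactly what was missing: $a_1=a_2=m+1$ handles $n=m+2$.

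The paper takes $p$ parts of size $m+1$ and one part of size $m+t-p$, with $p+1\le t\le m+p-2$. This gives $n=pm+t$, i.e.\ all $n\in[p(m+1)+1,(p+1)(m+1)-3]$ for $p\ge 1$, which is precisely the lemma's range.

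Finally, you do not need the uncovered set to be \emph{exactly} the residues $-2,-1,0 \pmod{m+1}$. The lemma only asks you to cover its stated set. Those exclusions are an artifact of the paper's choice of part sizes; your pure-clique family already covers them whenever $m\ge 7$.
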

\begin{proof}
    For $p\geq1$, consider the graph ${G}$ on $(p+1)m+t+1$ vertices defined as follows  
    $$\overline{G}=K_1\vee \left\{K_{m+t-p}\cup\left(\bigcup_{i=1}^pK_{m+1} \right)\right\},$$
    where $p+1\leq t< m+p-1$. Note that the size of the largest cycle in $\overline{G}$ is $m+t-p+1<2m$, therefore $\overline{G}\nsupseteq C_{2m}$. Further, note that $G$ is the complete multipartite graph with $p$ parts of size $m+1$ and one part of size $m+t-p$ together with an isolated vertex (for $p=2$, see \Cref{fig:placeholder}). Therefore, we have  $\max\{N_G(u)\cap N_G(v): u,v\in V(G)\}=pm+t-1$, which implies $G\nsupseteq K_{2,pm+t}$.
\end{proof}

\begin{figure}[!ht]
    \centering
    \begin{tikzpicture}[x=0.8cm,y=0.8cm]
	\begin{pgfonlayer}{nodelayer}
		\node [style=doty,scale=0.8] (0) at (-3.25, 0.5) {};
		\node [style=none] (1) at (-1.5, 1.25) {};
		\node [style=none] (2) at (0, 1.25) {};
		\node [style=none] (3) at (-0.75, 2) {};
		\node [style=none] (4) at (-0.75, 0.5) {};
		\node [style=none] (5) at (-4.5, -1.75) {};
		\node [style=none] (6) at (-3, -1.75) {};
		\node [style=none] (7) at (-3.75, -1) {};
		\node [style=none] (8) at (-3.75, -2.5) {};
		\node [style=none] (9) at (-5.75, 2.25) {};
		\node [style=none] (10) at (-4.25, 2.25) {};
		\node [style=none] (11) at (-5, 3) {};
		\node [style=none] (12) at (-5, 1.5) {};
		\node [style=doty,scale=0.8] (13) at (5.5, 0.5) {};
		\node [style=none] (14) at (7.25, 1.25) {};
		\node [style=none] (15) at (8.75, 1.25) {};
		\node [style=none] (16) at (8, 2) {};
		\node [style=none] (17) at (8, 0.5) {};
		\node [style=none] (18) at (4.25, -1.75) {};
		\node [style=none] (19) at (5.75, -1.75) {};
		\node [style=none] (20) at (5, -1) {};
		\node [style=none] (21) at (5, -2.5) {};
		\node [style=none] (22) at (3, 2.25) {};
		\node [style=none] (23) at (4.5, 2.25) {};
		\node [style=none] (24) at (3.75, 3) {};
		\node [style=none] (25) at (3.75, 1.5) {};
		\node [style=none] (26) at (-4.75, 2.75) {};
		\node [style=none] (27) at (-4.5, 2.75) {};
		\node [style=none] (28) at (-4.5, 2.5) {};
		\node [style=none] (29) at (-4.75, 2.25) {};
		\node [style=none] (30) at (-4.5, 2) {};
		\node [style=none] (31) at (-4.75, 2) {};
		\node [style=none] (32) at (-5, 2) {};
		\node [style=none] (33) at (-5.25, 1.75) {};
		\node [style=none] (34) at (-0.75, 1.75) {};
		\node [style=none] (35) at (-1, 1.5) {};
		\node [style=none] (36) at (-1.25, 1.25) {};
		\node [style=none] (37) at (-1, 1.25) {};
		\node [style=none] (38) at (-1.25, 1) {};
		\node [style=none] (39) at (-1, 1) {};
		\node [style=none] (40) at (-1, 0.75) {};
		\node [style=none] (41) at (-0.75, 0.75) {};
		\node [style=none] (42) at (-4.25, -1.75) {};
		\node [style=none] (43) at (-4.25, -1.5) {};
		\node [style=none] (44) at (-4.25, -1.5) {};
		\node [style=none] (45) at (-4.25, -1.5) {};
		\node [style=none] (46) at (-4, -1.25) {};
		\node [style=none] (47) at (-3.75, -1.25) {};
		\node [style=none] (48) at (-3.75, -1.25) {};
		\node [style=none] (49) at (-3.25, -1.5) {};
		\node [style=none] (50) at (-3.25, -1.75) {};
		\node [style=none] (51) at (-3.5, -1.5) {};
		\node [style=none] (52) at (4.25, 2.5) {};
		\node [style=none] (53) at (4.25, 2.25) {};
		\node [style=none] (54) at (4, 2.25) {};
		\node [style=none] (55) at (3.75, 2) {};
		\node [style=none] (56) at (7.5, 1.75) {};
		\node [style=none] (57) at (7.75, 1.5) {};
		\node [style=none] (58) at (8, 1.25) {};
		\node [style=none] (59) at (8, 1) {};
		\node [style=none] (60) at (4.5, -1.5) {};
		\node [style=none] (61) at (4.75, -1.5) {};
		\node [style=none] (62) at (5, -1.5) {};
		\node [style=none] (63) at (5.25, -1.5) {};
		\node [style=none] (64) at (-5, 3.75) {$\overline{K}_{m+t-2}$};
		\node [style=none] (65) at (-0.5, 2.5) {$\overline{K}_{m+1}$};
		\node [style=none] (66) at (-2, -2) {$\overline{K}_{m+1}$};
		\node [style=none] (67) at (3.75, 3.5) {$K_{m+t-2}$};
		\node [style=none] (68) at (8.0, 2.5) {$K_{m+1}$};
		\node [style=none] (69) at (6.65, -2) {$K_{m+1}$};

        \node [style=none] (70) at (-4, -3) {$G$};

        \node [style=none] (71) at (5, -3) {$\overline{G}$};
	\end{pgfonlayer}
	\begin{pgfonlayer}{edgelayer}
		\draw [bend left=45] (3.center) to (2.center);
		\draw [bend left=45] (2.center) to (4.center);
		\draw [bend left=45] (4.center) to (1.center);
		\draw [bend left=45] (1.center) to (3.center);
		\draw [bend left=45] (7.center) to (6.center);
		\draw [bend left=45] (6.center) to (8.center);
		\draw [bend left=45] (8.center) to (5.center);
		\draw [bend left=45] (5.center) to (7.center);
		\draw [bend left=45] (11.center) to (10.center);
		\draw [bend left=45] (10.center) to (12.center);
		\draw [bend left=45] (12.center) to (9.center);
		\draw [bend left=45] (9.center) to (11.center);
		\draw [bend left=45] (16.center) to (15.center);
		\draw [bend left=45] (15.center) to (17.center);
		\draw [bend left=45] (17.center) to (14.center);
		\draw [bend left=45] (14.center) to (16.center);
		\draw [bend left=45] (20.center) to (19.center);
		\draw [bend left=45] (19.center) to (21.center);
		\draw [bend left=45] (21.center) to (18.center);
		\draw [bend left=45] (18.center) to (20.center);
		\draw [bend left=45] (24.center) to (23.center);
		\draw [bend left=45] (23.center) to (25.center);
		\draw [bend left=45] (25.center) to (22.center);
		\draw [bend left=45] (22.center) to (24.center);
		\draw (26.center) to (34.center);
		\draw (30.center) to (36.center);
		\draw (28.center) to (37.center);
		\draw (29.center) to (35.center);
		\draw (38.center) to (51.center);
		\draw (40.center) to (50.center);
		\draw (39.center) to (48.center);
		\draw (41.center) to (49.center);
		\draw (31.center) to (47.center);
		\draw (12.center) to (46.center);
		\draw (33.center) to (45.center);
		\draw (32.center) to (42.center);
		\draw (56.center) to (13);
		\draw (57.center) to (13);
		\draw (58.center) to (13);
		\draw (59.center) to (13);
		\draw (13) to (63.center);
		\draw (13) to (62.center);
		\draw (13) to (61.center);
		\draw (13) to (60.center);
		\draw (52.center) to (13);
		\draw (53.center) to (13);
		\draw (54.center) to (13);
		\draw (55.center) to (13);
	\end{pgfonlayer}
\end{tikzpicture}
    \caption{Graph $G$ on $3m+t+1$ vertices, $G\nsupseteq K_{2,2m+t}$ and $\overline{G}\nsupseteq C_{2m}$, $3\leq t< m+1$.}
    \label{fig:placeholder}
\end{figure}

\begin{lemma}\label{lem:construction2}
    $K_{2,n}$ is not $C_{2m}$-good, that is,
    $$R(K_{2,n},C_{2m})>n+m+1,$$
    for any integer $n\in \bigcup_{q=2}^{\infty}\{q(m+1)-2,q(m+1)-1,q(m+1)\}$.
\end{lemma}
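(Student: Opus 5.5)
The plan is to mimic the construction in \Cref{lem:construction1}, but with the clique sizes in $\overline{G}$ chosen more flexibly, since the rigid choice there (one clique $K_{m+t-p}$ and $p$ copies of $K_{m+1}$) is exactly what fails when $n\equiv -2,-1,0 \pmod{m+1}$. First the target: $\chi(C_{2m})=2$ and $\sigma(C_{2m})=m$, so Burr's bound for the pair reads $R(K_{2,n},C_{2m})\ge n+m+1$. It therefore suffices to exhibit a graph $G$ on $N=n+m+1$ vertices with $G\nsupseteq K_{2,n}$ and $\overline{G}\nsupseteq C_{2m}$.

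\textbf{Construction.} Take
$$\overline{G}=K_{s_1}\cup K_{s_2}\cup\cdots\cup K_{s_r},\qquad s_1+\cdots+s_r=N,$$
a disjoint union of cliques, so that $G$ is the complete multipartite graph with parts of sizes $s_1,\dots,s_r$. Every cycle of $\overline{G}$ lies in a single clique. Hence $\overline{G}\nsupseteq C_{2m}$ as soon as $s_i\le 2m-1$ for all $i$.

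For two vertices $u,v$ of $G$, the common neighbourhood is all vertices outside the part(s) containing them:
\begin{itemize}
\item if $u,v$ lie in the same part $S_i$, it has size $N-s_i$;
\item if they lie in different parts $S_i,S_j$, it has size $N-s_i-s_j$.
\end{itemize}
So the maximum common neighbourhood is $N-\min_i s_i$. Recalling that $G$ is $K_{2,n}$-free iff every common neighbourhood has size at most $n-1$, we need $N-\min_i s_i\le n-1$, i.e.\ $\min_i s_i\ge m+2$.

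So the whole statement reduces to a number-theoretic claim: for $n=q(m+1)-c$ with $q\ge 2$ and $c\in\{0,1,2\}$, the integer
$$N=n+m+1=(q+1)(m+1)-c$$
can be written as a sum of integers each lying in $[m+2,\,2m-1]$.

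\textbf{Partition step.} With $r$ parts, the achievable totals form the full interval $[r(m+2),\,r(2m-1)]$: start from all parts equal to $m+2$ and increase parts one unit at a time. I would choose $r$ so that $r(m+2)\le N\le r(2m-1)$, for instance $r=\lfloor N/(m+2)\rfloor$. The upper inequality $N\le r(2m-1)$ then needs checking. Since consecutive intervals $[r(m+2),r(2m-1)]$ and $[(r+1)(m+2),(r+1)(2m-1)]$ overlap or abut once $r(m-3)\ge m+1$, all sufficiently large $N$ are covered. For the specific values $N=(q+1)(m+1)-c$ one can instead write down an explicit partition. For example, take $q$ parts, each of size about $N/q$, which is roughly $m+1+(m+1-c)/q$. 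This lies in $[m+2,2m-1]$ provided $q\le m+1-c$ and $m\ge 4$. When $q$ is larger, use more parts, mostly of size $m+2$.

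\textbf{Main obstacle.} The hard part is exactly this partition: covering \emph{all} $q\ge 2$ and small $m$ uniformly. For small $m$ (say $m\le 4$) the window $[m+2,2m-1]$ is tiny or empty. In that regime I would first try the variant used in \Cref{lem:construction1}, namely adding a universal vertex:
$$\overline{G}=K_1\vee\bigcup_i K_{s_i},\qquad \sum_i s_i=N-1.$$
There the conditions become $m+1\le s_i\le 2m-2$, which shifts the admissible window and might handle the residual cases. For whatever small pairs $(m,q)$ remain, I would check them directly by hand.
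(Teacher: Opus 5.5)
Your reduction is correct, and it takes a genuinely different (and slightly stronger) route than the paper's.

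\textbf{The paper's route.} The paper uses exactly the universal-vertex variant you hold in reserve, with one explicit partition:
$$\overline{G}=K_1\vee\{K_{2m-t-2}\cup K_{m+4}\cup (q-2)K_{m+1}\},$$
where its $t$ is your $c$. This splits $N-1$ into parts in $[m+1,2m-2]$, so $G$ is complete multipartite plus an isolated vertex, with maximum common neighbourhood $n-1$. It gives a single closed-form graph with no case analysis in $q$. However, it silently needs $m\ge 6$: the block $K_1\vee K_{m+4}=K_{m+5}$ contains $C_{2m}$ whenever $m\le 5$.

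\textbf{Your route.} Dropping the universal vertex (parts in $[m+2,2m-1]$) turns the lemma into a pure partition problem, and your interval argument settles it. For $m\ge 7$ the intervals $[r(m+2),r(2m-1)]$, $r\ge 2$, are contiguous, so every $N\ge 2m+4$ is representable. This even gives all $n\ge m+3$ at once, subsuming \Cref{lem:construction1} in that range. For $m=5,6$ the only gaps above $3m+1$ are $\{19,20\}$ and $\{23\}$ respectively, and no $N=(q+1)(m+1)-c$ lands in them. So you also cover $m=5$, which the universal-vertex scheme cannot reach. For example, for $(m,n)=(5,12)$ the graph $K_9\cup K_9$ works, while $17$ has no partition into parts from $[6,8]$.

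\textbf{Two caveats.}

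First, your explicit $q$-part split needs $m\ge 5$, not $m\ge 4$. For $(m,q,c)=(4,2,0)$ it gives $N=15=7+8$, and $8>2m-1$.

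Second, ``check by hand'' cannot close $m\le 4$:
\begin{itemize}
\item For $m=2$ both windows are empty.
\item For $m=3$ the windows are the single values $5$ and $4$. Infinitely many $q$ fail, and the universal-vertex variant never applies, since $4\nmid N-1$.
\item For $(m,n)=(4,10)$, neither $15$ nor $14$ is representable.
\end{itemize}
These cases would need a different kind of construction; for $m=2$, for instance, $\overline{G}=C_{n+3}$ works. This is not a deficiency relative to the paper, whose argument does not cover $m\le 5$ at all.
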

\begin{proof}
For $q\geq 2$ and $t\in\{0,1,2\}$, take $n=q(m+1)-t$ and consider the graph $G$ on $n+m+1$ vertices such that 
    $$\overline{G}=K_1\vee \left\{K_{2m-t-2}\cup K_{m+4}\cup\left(\bigcup_{i=1}^{q-2}K_{m+1} \right)\right\}.$$
    Clearly, $\overline{G}\nsupseteq C_{2m}$. Further, note that $G$ is the complete multipartite graph with $q-2$ parts of size $m+1$, one part of size $m+4$ and one part of size $2m-t-2$. Therefore, we have  $\max\{|N_G(u)\cap N_G(v)|: u,v\in V(G)\}=n-1$, which implies $G\nsupseteq K_{2,n}$.
\end{proof}

The proof of \Cref{thm:RamseyBadness in n>m+1} follows from the \Cref{lem:construction1,lem:construction2}.

\begin{acknowledgement}
This research work has no associated data. The work of the author Abisek Dewan is supported by University Grants Commission, India (Beneficiary Code/Flag: BWBDA00147662 U). The author Sayan Gupta thanks NISER Bhubaneswar and Homi Bhabha National Institute (HBNI), Mumbai for funding his PhD fellowship. The work of the author Rajiv Mishra is supported by Council of Scientific $\And$ Industrial Research, India(File number: 09/921(0347)/2021-EMR-I).  
\end{acknowledgement}

\bibliographystyle{siam}
	\bibliography{goodbib}
\end{document}